\numberwithin{equation}{section}
\newtheorem{theorem}{Theorem}[section]
\newtheorem{lemma}[theorem]{Lemma}
 \newtheorem{corollary}[theorem]{Corollary}
      \theoremstyle{definition}
     \newtheorem*{definition}{Definition}
     \newtheorem{example}[theorem]{Example}
     \theoremstyle{remark}
     \newtheorem{remark}[theorem]{Remark}
\newcommand{\Sym}{\mathop{\mathrm{Sym}}}
\newcommand{\Alt}{\mathop{\mathrm{Alt}}}
\newcommand{\Aut}{\mathop{\mathrm{Aut}}}
\newcommand{\Out}{\mathop{\mathrm{Out}}}
\newcommand{\AGL}{\mathop{\mathrm{AGL}}}
\newcommand{\PSL}{\mathop{\mathrm{PSL}}}
\newcommand{\PSU}{\mathop{\mathrm{PSU}}}
\newcommand{\Stab}{\mathop{\mathrm{Stab}}}
\newcommand{\StabG}{\mathop{\mathrm{Stab}_G}}
\newcommand{\StabA}{\mathop{\mathrm{Stab}_A}}
\newcommand{\StabB}{\mathop{\mathrm{Stab}_B}}
\newcommand{\StabS}{\mathop{\mathrm{Stab}_S}}
\newcommand{\Ker}{\mathop{\mathrm{Ker}}}
\newcommand{\Fi}{\mathop{\mathrm{Fi}}}
 \definecolor{mycolor}{rgb}{0.55,0.0,0.16}
  \definecolor{myred}{rgb}{0.75,0.0,0.16} 
  \definecolor{mygreen}{rgb}{0.0,0.4,0.16} 
  \definecolor{myviolet}{rgb}{1,0,1} 
   \definecolor{mypink}{rgb}{0.9,0,0.5}
	\newcommand{\MYhref}[3][black]{\href{#2}{\color{#1}{#3}}}%
\subjclass[2020]{Primary: 20B05, 20D05, 05A18}  
\keywords{Permutation groups, primitive groups, asymmetry, derived length} 
\author[Luca Sabatini]{Luca Sabatini}
\address{\parbox{\linewidth}{Luca Sabatini, Mathematics Institute, Zeeman Building, University of Warwick\\
Coventry CV4\,7AL, United Kingdom \vspace{0.1cm}}}
\email{luca.sabatini@warwick.ac.uk, sabatini.math@gmail.com} 
\begin{document} 
 \title[Stabilizers in finite permutation groups]{On stabilizers in finite permutation groups} 

\maketitle 

\begin{abstract} 
Let $G$ be a permutation group on the finite set $\Omega$.
We prove various results about partitions of $\Omega$ whose stabilizers have good properties.
In particular,
in every solvable permutation group there is a set-stabilizer whose orbits have length at most $6$,
which is best possible and answers two questions of Babai.
Every solvable maximal subgroup of any almost simple group has derived length at most $10$, which is best possible.
In every primitive group with solvable stabilizer,
there are two points whose stabilizer has derived length bounded by an absolute constant.
\end{abstract} 


\vspace{0.5cm}
\section{Introduction} 

Let $G$ be a permutation group on the finite set $\Omega$.
One of the most studied topics in permutation group theory is finding a partition of $\Omega$ whose stabilizer is trivial.
For example, a subset $\Delta \subseteq \Omega$ is said to be in a regular orbit if its setwise stabilizer is trivial.
In general, the existence of such a $\Delta$ is far from certain \cite{Glu83,Ser97}, and it is necessary to stabilize many subsets.
On the other hand, the base size is the minimum number of points such that their pointwise stabilizer is trivial.
This corresponds to fixing a partition made by some single points and the complement.
Finding upper bounds for the base size is a difficult problem that has produced an immense volume of research,
and some of the most impressive results obtained are \cite{Bab81,Pyb93,Ser96,DHM18}.
For more details on these topics, we refer the reader to the pleasant survey \cite{BC11}.

In this paper, we are interested in a small variation on these questions.
More precisely, we fix only the bare minimum (so one subset, or a pair of points)
and we do not aim to find a trivial stabilizer, as that would be impossible in general,
but to find a stabilizer that has at least some good properties.
Regarding subsets, Babai \cite{Bab22} has showed that in every solvable permutation group
there exists a set-stabilizer whose orbits have length bounded by a nonexplicit absolute constant.
Answering \cite[Quest. 7b]{Bab22}, our first main result provides the best possible constant:

\begin{theorem} \label{thSolvSet} 
Let $G \leqslant \Sym(\Omega)$ be a solvable permutation group.
Then there exists a subset of $\Omega$ whose setwise stabilizer has all orbits of length at most $6$.
The constant $6$ is best possible.
\end{theorem}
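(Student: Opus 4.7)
I would prove the theorem by induction on $|\Omega|$, via two structural reductions bringing the problem to primitive solvable groups, together with a classification-based analysis of that case. The first reduction is to the transitive case: if the orbits of $G$ on $\Omega$ are $\Omega_1,\dots,\Omega_r$, apply the theorem to each transitive constituent $G^{\Omega_i}$ to obtain subsets $\Delta_i\subseteq\Omega_i$ with $(G^{\Omega_i})_{\Delta_i}$ having orbits of length at most $6$, and set $\Delta=\bigcup_i\Delta_i$. Since $G_\Delta\leqslant\bigcap_i G_{\Delta_i}$, its orbits on each $\Omega_i$ refine those of $G_{\Delta_i}$, so they remain of length at most $6$.

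Next assume $G$ is transitive. If $G$ is imprimitive, pick a maximal block system $\mathcal{B}$, so that the induced action $\bar G:=G^{\mathcal B}$ is primitive solvable on $\mathcal{B}$. The primitive case (below) supplies $\Sigma\subseteq\mathcal B$ with all $\bar G_\Sigma$-orbits on $\mathcal B$ of length at most $6$. Lifting $\Sigma$ to $\Omega_\Sigma:=\bigcup_{B\in\Sigma}B$, one then applies the inductive hypothesis to the block-kernel (a solvable group of smaller degree) to refine $\Omega_\Sigma$ within each block, choosing coherently across blocks in the same $G_{\Omega_\Sigma}$-orbit so that the final subset $\Delta$ controls orbit lengths both within and across blocks.

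For the primitive case, $G$ has elementary abelian socle $V\cong\F_p^d$ acting regularly, so I identify $\Omega$ with $V$ and write $G=V\rtimes G_0$ with $G_0\leqslant\GL_d(p)$ an irreducible solvable linear group. Using the classical structural description of such $G_0$ due to Suprunenko, Huppert, and Short---either $G_0$ embeds into $\Gamma L_1(p^d)$ after a suitable field identification, or $G_0$ normalises an extraspecial $r$-subgroup acting absolutely irreducibly on $V$---I would construct a suitable $\Delta\subseteq V$, typically a coset of a well-chosen subspace, whose setwise stabilizer has orbits of length at most $6$. A bounded list of exceptional low-dimensional configurations then needs to be verified by direct computation. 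I expect this primitive analysis, and especially the treatment of those exceptional configurations, to be the main obstacle. Finally, to see that the constant $6$ is sharp, I would exhibit a specific solvable permutation group---naturally arising from that extremal analysis---for which every subset has some setwise-stabilizer orbit of length at least $6$, which is a direct finite verification.
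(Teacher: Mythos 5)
Your reduction to the transitive case is fine, and your choice of decomposition (a maximal block system, so that the induced top action is primitive) matches the paper's. But the imprimitive step has a genuine gap, and it is exactly the point the paper flags as the crux: the property ``all orbits have length at most $6$'' is not preserved by group extensions, so orbit lengths \emph{multiply} across the two levels of the wreath product. If the stabilizer of your final subset has an orbit of length $a\le 6$ on the block system and an orbit of length $b\le 6$ inside a block, its orbit on $\Omega$ through a corresponding point typically has length $ab$, which can reach $36$; ``choosing coherently across blocks'' does not repair this, and no single application of the inductive hypothesis inside blocks combined with one good subset of the block system can. The paper's fix is to prove a stronger inductive statement --- a transitive solvable group of degree $\ge 4$ has at least $5$ pairwise inequivalent good orbits on the power set --- and then, in the wreath step $G\le B\wr A$, to use $5$ inequivalent good subsets of a block as five distinct ``colors'' painted on the block system. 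Because the colors are inequivalent, the stabilizer of the resulting subset must fix an asymmetric $5$-coloring of the block system, and primitive solvable groups admit such colorings (this rests on the Moret\'o--Wolf fact that a primitive solvable group has three disjoint subsets whose set-stabilizers intersect trivially); hence the top contribution is trivial and only the factor $\le 6$ from inside the blocks survives. When $B$ has fewer than $5$ good orbits one necessarily has $|\Omega_2|\le 3$, i.e.\ $B\le\Sym(2)$ or $\Sym(3)$, and one instead trades off: a $2$-regular subset of the block against a $3$-asymmetric $3$-coloring of the block system (product $2\cdot 3=6$), or a $3$-regular subset against a $2$-asymmetric $4$-coloring (product $3\cdot 2=6$). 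Without some version of this counting of inequivalent good orbits and the accompanying coloring calculus, the induction does not close.

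Two smaller points. For the primitive base case you propose redoing the Suprunenko--Huppert--Short analysis; the paper instead quotes the Gluck/Moret\'o--Wolf regular-orbit theorem (degree $\ge 10$ gives at least $8$ regular orbits on the power set) and checks degrees $\le 9$ by computer --- and note that what the induction actually needs from primitive groups is the count of at least $5$ good orbits (together with the existence of asymmetric colorings with few colors), not merely one good subset. For sharpness, the extremal example is not primitive: it is $\bigl(\mathrm{A\Gamma L}_1(8)\wr_4\Sym(4)\bigr)\wr_2\Sym(2)$, built precisely by exploiting the multiplication of orbit lengths described above, so a verification confined to the primitive analysis will not find it.
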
 

The following consequence is immediate.

\begin{corollary} \label{corMain}
Let $G \leqslant \Sym(\Omega)$ be a solvable permutation group.
Then there exists a subset of $\Omega$ whose setwise stabilizer has derived length at most $3$.
\end{corollary}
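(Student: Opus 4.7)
The plan is to read off the corollary directly from Theorem~\ref{thSolvSet}. Apply that theorem to obtain a subset $\Delta \subseteq \Omega$ whose setwise stabilizer $H := \StabG(\Delta)$ acts on $\Omega$ with all orbits of length at most $6$. Writing $\Omega_1, \ldots, \Omega_r$ for these orbits, the restriction maps fit together into a faithful embedding
\[
H \hookrightarrow \prod_{i=1}^{r} \Sym(\Omega_i),
\]
where each factor satisfies $|\Omega_i| \leqslant 6$. Since the derived length of a subgroup of a finite direct product is at most the maximum of the derived lengths of the factors, it suffices to prove the following local statement: every solvable subgroup of $\Sym(n)$ with $n \leqslant 6$ has derived length at most $3$.

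This local statement is a short, classical finite check. For $n \leqslant 4$ the group $\Sym(n)$ is itself solvable, with derived series $\Sym(4) \supset \Alt(4) \supset V_4 \supset 1$ of length exactly $3$. For $n = 5$ every solvable subgroup embeds into the Frobenius group $\AGL(1,5)$ of order $20$, which has derived length $2$. For $n = 6$ it suffices to inspect the maximal solvable overgroups---e.g.\ $\Sym(3) \wr \Sym(2)$, $\Sym(2) \wr \Sym(3)$, $\Sym(4) \times \Sym(2)$, and the transitive copies of $\Sym(4)$---in each of which the (iterated) semidirect product structure makes the derived series explicit and bounds the derived length by $3$, with equality attained (for instance by $\Sym(3) \wr \Sym(2)$). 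Combining this with the reduction above yields the corollary.

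The main---indeed, only---non-trivial input is Theorem~\ref{thSolvSet} itself; the $n = 6$ verification is entirely routine and carries no conceptual obstacle.
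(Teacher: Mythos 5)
Your proof is correct and follows essentially the same route as the paper: apply Theorem~\ref{thSolvSet}, embed the resulting stabilizer into a direct product of symmetric groups of degree at most $6$, and invoke the fact that every solvable subgroup of $\Sym(6)$ has derived length at most $3$. One small slip in your finite check: it is not true that every solvable subgroup of $\Sym(5)$ embeds into $\AGL(1,5)$ (the point stabilizer $\Sym(4)$ does not) --- only the transitive ones do --- but the intransitive ones reduce to products of smaller symmetric groups, so the conclusion that solvable subgroups of $\Sym(5)$ have derived length at most $3$ still holds.
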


It happens that metabelian stabilizers need not exist in general
(see Example \ref{exBad}), so Corollary \ref{corMain} answers \cite[Quest. 7a]{Bab22}.
Incidentally, the same example answers negatively a question in \cite{Glu25} on the existence of supersolvable set-stabilizers.
We stress that the two constants $6$ and $3$ above turned out to be higher than initially expected,
and better results can be obtained for primitive solvable groups.
As far as we know, Theorem \ref{thSolvSet} is the first statement concerning stabilizers of imprimitive groups
where the property involved is not preserved by a group extension,
and part of the difficulty of the proof is in finding the right statement to prove by induction.
This strategy, combined with a detailed analysis of primitive groups, turned out to be both powerful and flexible.

With similar ideas, we are able to show that, in every solvable permutation group,
stabilizing two disjoint subsets can force the orbits to have length at most $2$.
This gives a strong positive answer to \cite[Quest. 7c]{Bab22}.
Moreover, we give elementary proofs concerning nonsolvable groups:
we provide an easier proof of a theorem of Cameron \cite{Cam86}, and answer \cite[Quest. 6a]{Bab22}.
In particular, we prove the following:

\begin{theorem} \label{thNonAlt}
Let $G$ be any permutation group on a finite set.
Then there exists a set-stabilizer whose nonalternating composition factors have order bounded by an absolute constant.
\end{theorem}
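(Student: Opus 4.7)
The plan is an induction on $|\Omega|$, using the standard three-case decomposition based on the transitivity level of $G$.

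For the intransitive case, if $G$ has orbits $\Omega_1,\dots,\Omega_r$, I apply the induction hypothesis to each restricted action $G^{\Omega_i}$ to produce a subset $\Delta_i\subseteq\Omega_i$ with nonalternating composition factors of $(G^{\Omega_i})_{\{\Delta_i\}}$ bounded by some absolute constant $C$. Taking $\Delta=\bigsqcup_i\Delta_i$ gives a subset of $\Omega$ whose setwise stabilizer embeds into the direct product of the individual stabilizers, and hence inherits the bound $C$ on nonalternating composition factor orders.

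For the imprimitive transitive case, I use a maximal $G$-invariant block system $\Sigma$ with kernel $K$, so that $G/K$ acts primitively on $\Sigma$ with $|\Sigma|<|\Omega|$. Applying the primitive case to $G/K$ yields $\Sigma_0\subseteq\Sigma$ whose setwise stabilizer has the desired property, and this lifts to $\widetilde\Delta=\bigcup_{B\in\Sigma_0}B\subseteq\Omega$. The quotient $G_{\{\widetilde\Delta\}}/K$ is then under control, and I further refine inside each block using the induction hypothesis applied to $K^B$ (where $|B|<|\Omega|$), patched together in a way compatible with the action of $G_{\{\widetilde\Delta\}}$ permuting blocks, by fixing a single pattern inside each block and transporting it across blocks in the same $G$-orbit.

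The primitive case is the heart of the proof and the main obstacle. When $G\supseteq\Alt(\Omega)$ there is nothing to show. When $G\not\supseteq\Alt(\Omega)$ is primitive of degree $n$, classical results (Babai--Cameron--P\'alfy, or Cameron's classification of large primitive groups) pin down the nonabelian composition factors of $G$ to a short list, with the offending nonalternating simple groups appearing either in bounded rank classical form (via the socle of an almost simple group) or through a linear/affine point stabilizer. The elementary reduction is to stabilize a small subset, for instance a base or a frame adapted to the geometry of $G$, so that the resulting setwise stabilizer embeds in a much smaller permutation group; one then iterates through the intransitive case to clean up. The delicate point is to make the whole procedure yield a \emph{set}-stabilizer in $G$, i.e.\ the stabilizer of a single subset of $\Omega$, rather than an iterated or pointwise stabilizer. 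I expect the bookkeeping here, combined with establishing that the primitive non-$\Alt$-containing case really does allow absolutely bounded composition factors after one such reduction, to be the main technical effort of the proof.
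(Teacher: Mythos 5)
Your skeleton (induction on $|\Omega|$ with intransitive / imprimitive / primitive cases) matches the paper's, but the two steps you defer as ``bookkeeping'' are exactly where the proof lives, and your treatment of the imprimitive case would fail as stated. The problem is the patching step. The final object must be a single subset $\Delta\subseteq\Omega$, and its setwise stabilizer is computed in $G$, not inside $G_{\{\widetilde\Delta\}}$ --- you do not get to intersect with $G_{\{\widetilde\Delta\}}$ for free. If you fix one pattern and transport it to every block (all blocks lie in one $G$-orbit), then $\Delta$ no longer remembers $\Sigma_0$ and the image of $G_{\{\Delta\}}$ in $G/K$ can be all of the primitive group $G/K$, whose nonalternating composition factors are unbounded. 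If instead you leave the blocks outside $\Sigma_0$ empty so that $\Delta$ does remember $\Sigma_0$, then $G_{\{\Delta\}}\cap K$ is unconstrained on those blocks and contains copies of the full $K^B$, again unbounded. The paper's fix is to strengthen the induction hypothesis: one proves there are at least \emph{two inequivalent} nice orbits of subsets of a block (Theorem \ref{thCam+}), uses these two inequivalent patterns as the two ``colors'' of the block system (pattern $1$ on the blocks of $\Sigma_0$, pattern $2$ on the rest), and observes (Lemma \ref{lemStabEmb}) that, precisely because the two patterns are inequivalent under the block group, $G_{\{\Delta\}}$ is forced to preserve the block $2$-coloring and hence embeds in $\bigl(\prod_B \Stab(\Delta\cap B)\bigr)\rtimes\Stab_A(\Sigma_0)$; the property is extension-closed, and Lemma \ref{lemIneq} reproduces two inequivalent nice orbits at the top level to keep the induction alive. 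This two-orbit mechanism is the key idea and is absent from your proposal.

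The primitive case is also not actually proved in your sketch: ``stabilize a base or frame and iterate through the intransitive case'' yields an iterated stabilizer, not the stabilizer of one subset, which is the very difficulty you acknowledge. The paper avoids all structural analysis here: for primitive $G\not\supseteq\Alt(n)$ of large degree, the Sun--Wilmes bound $|G|\le c^{\sqrt n}$ fed into Cameron's counting argument shows that almost all orbits on the power set are regular, so one finds two subsets with \emph{trivial} setwise stabilizer; bounded degrees and the groups containing $\Alt(n)$ are immediate (using that $G$ has at least $n+1$ orbits on $\mathcal{P}(\Omega)$ to secure two of them). So the primitive case needs only a CFSG-free order bound, not a classification of composition factors.
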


This is new even using the Classification of Finite Simple Groups.

We come to the question of fixing two points.
Extending the work of Seress \cite{Ser96},
Burness \cite{Bur21} showed that in a primitive group with solvable stabilizer
there always exist $5$ points whose pointwise stabilizer is trivial.
The hypothesis of having a solvable stabilizer is natural,
 if we aim to find structural results on the stabilizer of two points.
For primitive solvable groups, in \cite{LS25} the authors observed
that there exist two points whose pointwise stabilizer has bounded derived length.
On the other hand, motivated by various applications,
Li and Zhang \cite{LZ11} have classified the solvable maximal subgroups of the almost simple groups.
Combining their work with some properties of the finite simple groups, we remark the following:

\begin{theorem} \label{thMaxAS}
 Let $G$ be an almost simple group, and let $H<G$ be a maximal subgroup.
 If $H$ is solvable, then the derived length of $H$ is at most $10$.
 The constant $10$ is attained only by the solvable maximal subgroups of the Fischer group $\Fi_{_{23}}$.
 \end{theorem}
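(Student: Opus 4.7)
The plan is to invoke the Li--Zhang classification \cite{LZ11} of solvable maximal subgroups of almost simple groups, and then bound the derived length case by case according to the socle of $G$. The four cases to consider are: socle alternating, socle a classical group, socle an exceptional group of Lie type, and socle sporadic.

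In the alternating case, if $H \leqslant \Sym(n)$ is a solvable maximal subgroup containing $\Alt(n)$ only when $n \leqslant 4$, and otherwise $H$ acts primitively on $\{1,\ldots,n\}$; classical upper bounds on the derived length of primitive solvable permutation groups (going back to Huppert, with subsequent refinements of order $O(\log n)$) combined with direct inspection of the small-degree cases give a derived length well below $10$ in this family. For classical groups, Aschbacher's theorem restricts solvable maximal subgroups to a short list: essentially, normalizers of Singer cycles (class $\mathcal{C}_3$), normalizers of extraspecial $r$-groups (class $\mathcal{C}_6$), and a handful of exceptional configurations arising only for small dimension and small field size. Each of these has an explicit extension description from which the derived length is immediately computable: for the Singer normalizer the structure is cyclic-by-cyclic-by-cyclic, giving derived length at most $3$, and the $\mathcal{C}_6$ normalizers are extraspecial-by-(solvable subgroup of $\Sp$ or $\GO$) with derived length bounded by a small constant. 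Exceptional groups of Lie type contribute a very short list (largely normalizers of maximal tori in Suzuki, Ree, and small-rank groups), and each is handled by the same type of direct computation.

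The delicate case is the sporadic one. Here I would go through the list of sporadic almost simple groups and, for each, consult the explicit description of its solvable maximal subgroups (as tabulated in the ATLAS and collected in \cite{LZ11}); for each such subgroup I would read off the derived length from the given extension shape. The maximum over this finite list is then compared against the bounds obtained in the other three cases. The point is that the classification is already finished; what remains is purely an arithmetic check on finitely many explicit groups. The verification shows that the derived length is always at most $10$, and that equality is attained precisely by the solvable maximal subgroups of $\Fi_{23}$.

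The main obstacle is the sporadic case and, more specifically, the bookkeeping for $\Fi_{23}$: one has to identify the correct maximal subgroup in the ATLAS, unfold its extension structure carefully, and verify that every term of its derived series is strictly nonabelian up to level $9$. The classical case requires some care too, since for $\mathcal{C}_6$ subgroups in dimensions like $4$ or $8$ the extraspecial normalizer can have derived length creeping up toward the bound, so one must check that the almost simple groups involved do not push any of these above $10$. Everything else reduces to a routine application of Li--Zhang together with standard derived-length computations for cyclic, extraspecial, and small linear extensions.
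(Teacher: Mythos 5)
Your overall strategy --- reduce to the Li--Zhang classification and then bound the derived length family by family, with the sporadic case (and $\Fi_{_{23}}$ in particular) supplying the extremal value --- is the same as the paper's. But there is a genuine gap in how you pass from the simple group to the almost simple group. All of your concrete derived-length estimates (Singer normalizers are metacyclic, $\mathcal{C}_6$ normalizers are extraspecial-by-small, primitive solvable subgroups of $\Sym(n)$, ATLAS shapes of sporadic maximal subgroups) are estimates for $H \cap T$, or at best for $H_0 = H \cap G_0$, where $G_0$ is the Li--Zhang subgroup with $T \leqslant G_0 \leqslant G$ minimal subject to $H \cap G_0$ being maximal in $G_0$. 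The theorem, however, concerns $H$ itself, and $H/H_0 \cong G/G_0$ is a section of $\Out(T)$, which is nontrivial in general and can itself have derived length up to $3$ (attained by the graph-automorphism group $\Sym(3)$ of $\mathrm{P}\Omega^+_8(q)$, $q$ odd). The paper closes this with the inequality $dl(H) \leq dl(H_0) + dl(G/G_0)$ together with the strong form of the Schreier conjecture ($dl(\Out(T)) \leq 3$, abelian for alternating and sporadic socles). Without this step your case analysis bounds the wrong quantity; in particular, your "derived length at most $3$" for Singer normalizers does not yet bound the corresponding maximal subgroups of, say, $\Aut(\PSL_r(q))$.

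Two smaller points. Your Aschbacher list for classical groups omits the parabolic ($\mathcal{C}_1$, e.g.\ Borel subgroups of $\PSL_2(q)$) and imprimitive ($\mathcal{C}_2$, e.g.\ $\GL_1(q) \wr \Sym(r)$ for $r \leq 4$) families, which do occur as solvable maximal subgroups; they are harmless (metabelian or nearly so) but must be accounted for, and for the bounded-rank families a uniform bound is most cleanly obtained from Dixon's theorem on solvable linear groups, as the paper does. Finally, the finite check in the sporadic case is not entirely routine: the paper notes that Table 15 of Li--Zhang contains errors (including at $\Fi_{_{23}}$ itself, whose solvable maximal subgroup $3^{1+8}.2^{1+6}.3^{1+2}.2S_4$ is the unique example of derived length $10$), and that the Baby Monster subgroup $[3^{11}].(S_4 \times 2S_4)$ is not directly accessible by computer and requires the ad hoc observation that a group of order $3^{11}$ has derived length at most $4$, giving $dl(H) \leq 8$ there.
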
 

Despite Theorem \ref{thMaxAS},
it is easy to exhibit primitive nonsolvable groups of product type with solvable stabilizer of arbitrarily large derived length.
Dealing with the product type case, we are able to answer in the affirmative the question at the end of \cite{LS25}:

\begin{theorem} \label{thMain} 
Let $G$ be a primitive group with solvable stabilizer.
Then there exist two points whose pointwise stabilizer has derived length bounded by an absolute constant.
\end{theorem}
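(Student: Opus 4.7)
The plan is to run through the O'Nan--Scott classification of primitive groups and observe that, under the hypothesis that $G_\alpha$ is solvable, only three of the types actually occur. The affine case is precisely the content of \cite{LS25}, where $G$ itself is solvable. In almost simple type, $G_\alpha$ is a solvable maximal subgroup of $G$, so Theorem \ref{thMaxAS} bounds $\mathrm{dl}(G_\alpha)\le 10$ directly and any second point works. The simple and compound diagonal types are vacuous because any point stabilizer there contains a diagonal copy of the nonabelian simple socle factor, contradicting solvability; the twisted wreath type is vacuous as well, because primitivity forces the twisting group (i.e.\ the stabilizer) to have a section isomorphic to the nonabelian simple socle factor.

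The substantive case is the product action type. Write $G\le H\wr P$ acting on $\Omega=\Delta^k$, where $H\le \Sym(\Delta)$ is primitive almost simple, $P\le \Sym(k)$ is transitive, and $B=H^k\cap G$ is the base group. Solvability of $G_\alpha$ forces $H_\delta$ solvable (by projection to a coordinate) and the image $\bar P$ of $G_\alpha$ in $P$ solvable; by Theorem \ref{thMaxAS}, $\mathrm{dl}(H_\delta)\le 10$, whereas $\mathrm{dl}(\bar P)$ can be as large as $\Theta(\log k)$. The second point must therefore be chosen to cut the top part down.

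Take $\alpha=(\delta,\ldots,\delta)$, so that $\bar P$ is a solvable transitive subgroup of $\Sym(k)$. Apply Theorem \ref{thSolvSet} to $\bar P$ to obtain a subset $S\subseteq\{1,\ldots,k\}$ whose setwise stabilizer in $\bar P$ has all orbits of length at most $6$; by Corollary \ref{corMain} this stabilizer has derived length at most $3$. Since $H$ is primitive on $\Delta$ with $|\Delta|\ge 2$, the stabilizer $H_\delta$ has at least one nontrivial orbit $O\subseteq\Delta$. Fix $\epsilon\in O$ and define $\beta=(\epsilon_1,\ldots,\epsilon_k)$ by $\epsilon_i=\epsilon$ for $i\in S$ and $\epsilon_i=\delta$ for $i\notin S$. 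A direct check in the wreath action shows that $G_{\alpha,\beta}\cap B\le H_\delta^k$ (fixing $\alpha$ pins each coordinate to $H_\delta$), while any permutation $\sigma$ appearing in an element of $G_{\alpha,\beta}$ stabilizes $S$ setwise: indeed $H_\delta$ preserves its orbits on $\Delta$, so the coordinates with $\epsilon_i=\delta$ cannot be interchanged with those where $\epsilon_i\in O$. Consequently $\mathrm{dl}(G_{\alpha,\beta}\cap B)\le 10$ and the image of $G_{\alpha,\beta}$ in $\bar P$ has derived length at most $3$, yielding $\mathrm{dl}(G_{\alpha,\beta})\le 13$.

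The main technical nuisance I expect is the case where $G$ is a proper subgroup of $H\wr P$ (subdirect base group) or where the wreath embedding is nonstandard, so that $\pi(G_\alpha)$ is a proper subgroup of $P$ rather than all of it: one must check that Theorem \ref{thSolvSet} applied to $\bar P$ still produces a $\beta$ consistent with the subgroup structure of $G$, and that the identification of the base quotient behaves as expected. This amounts to routine O'Nan--Scott bookkeeping and does not affect the bound on the derived length.
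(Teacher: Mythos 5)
Your proposal is correct and follows essentially the same route as the paper: reduce via O'Nan--Scott and \cite{LZ11} to the almost simple and product-action cases, apply Theorem \ref{thMaxAS} to the coordinate stabilizer $H_\delta$, and use Corollary \ref{corMain} on the solvable top group to choose the subset $S$ of coordinates defining the second point, arriving at the same bound $10+3=13$. Your device of marking the coordinates in $S$ by a point $\epsilon\neq\delta$ (so that elements of $H_\delta$ cannot send it to $\delta$) is exactly the paper's choice of a representative $c_2\in T\setminus Y$ of a nontrivial double coset $Yc_2Y$, since the $H_\delta$-suborbits on $\Delta$ correspond to these double cosets, so the two arguments coincide in substance.
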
 

The proof of Theorem \ref{thMain} is an application of Corollary \ref{corMain}
(or of the nonexplicit result of Babai),
and it is perhaps surprising that, to answer a question about pointwise stabilizers of primitive nonsolvable groups,
one has to deal with set-stabilizers of arbitrary solvable permutation groups.
The idea, usually applied in the context of imprimitive linear groups,
is using a partition to construct a vector with small centralizer (see the book \cite{MW93} for various examples).
Our argument involves a double coset decomposition, and a crude bound gives the constant $13$.
It is likely that a more in-depth study of almost simple groups leads to an improvement,
but the best possible constant is not even known for solvable groups \cite[Rem. 2.9]{LS25}.

The paper is simply organized.
In Section \ref{sec2} we study set-stabilizers, proving Theorems \ref{thSolvSet} and \ref{thNonAlt} and other related results.
In Section \ref{sec3}, we prove Theorems \ref{thMaxAS} and \ref{thMain}.
The methods are a mixture of group theory and combinatorics.

\vspace{0.3cm} \noindent
{\bfseries Acknowledgments:}
 I am grateful to S. Eberhard for many useful conversations.
 After the public announcement of a first version of this paper, I received precious comments from D. Gluck, H.Y. Huang and S.V. Skresanov.
 The first asked for a structural result in the spirit of Corollary \ref{corAbel} (see also \cite{Glu25}).
 The second noticed that the constant $10$ in Theorem \ref{thMaxAS} is attained by $\Fi_{_{23}}$,
 and that the constant in Theorem \ref{thMain} is at least $5$ for almost simple groups.
	The third brought the article \cite{Bab22} to my attention, and helped me with the computer calculations in Lemma \ref{lemComp}.
	 After this, I did a substantial rewrite of the paper and had useful conversations with L. Babai.
	Finally, I am thankful to L. Pyber for the many ideas he has given me in these years.

 \vspace{0.1cm}
 \section{Breaking symmetry with set-stabilizers} \label{sec2} 
 
 Let $G$ be a finite group. The derived series is defined by $G^{(0)}=G$, and
 $$ G^{(n+1)} \> = \> [G^{(n)},G^{(n)}] $$
 for all $n \geq 0$.
 For a solvable group $G$, the {\itshape derived length} $dl(G)$ is the minimum $n$ such that $G^{(n)}=1$.
 For example, $dl(G)=1$ if and only if $G$ is abelian and $dl(G) \leq 2$ if and only if $G$ is metabelian.
 
  When $G$ acts on a finite set $\Omega$, then $G$ acts naturally on the power set $\mathcal{P}(\Omega)$.
   If $\Delta \subseteq \Omega$,
we write $C_G(\Delta)$ for the pointwise stabilizer, and $\Stab_G(\Delta)$ for the setwise stabilizer.
For each $g \in G$, it is easy to see that the stabilizers of $\Delta$ and $\Delta^g$ are conjugate subgroups.
 As usual, we say that $\Delta$ lies in a regular orbit (i.e., the orbit $\Delta^G$) if $\Stab_G(\Delta)=1$.
 Of course, as subgroups of $G$, the stabilizers still act on $\Omega$.
 
 \begin{definition}
  For $i \geq 1$,
  we say that $\Delta \subseteq \Omega$ lies in a {\itshape $i$-regular} orbit if all the orbits of $\Stab_G(\Delta)$ on $\Omega$ have length at most $i$.
  \end{definition}
  
  The $1$-regular orbits are precisely the genuine regular orbits.
  Moreover, unless explicitly stated otherwise, the $6$-regular orbits will also be called {\itshape good} orbits.
  (This is motivated by Theorem \ref{thSolvSet}.)
  We note that, by definition,
  $$ \mbox{ regular } \Rightarrow \mbox{ 2-regular } \Rightarrow \mbox{ 3-regular } \Rightarrow \mbox{ good } . $$
 
 \begin{remark} \label{remTotOrbits}
 The total number of orbits of $G$ on $\mathcal{P}(\Omega)$ is at least $|\Omega|+1$.
 \end{remark}
 
We now make a simple consideration.
 If $G \leqslant \Sym(\Omega)$ and $\Delta = \{x_1,\ldots,x_b\}$ is a base (i.e., $C_G(\Delta)=1$),
 then
  $$ \StabG(\Delta) \cong \StabG(\Delta)/C_G(\Delta) \leqslant \Sym(b) . $$
 So, if $\Stab_G(\Delta)$ is solvable, then $dl(\Stab_G(\Delta)) \leq  f(b)$, where
 $$ f(b) = \> \max \, \{ dl(S) : S \leqslant \Sym(b), S \mbox{ solvable} \} . $$ 
 We point out that $f(b) = O(\log b)$ \cite{Dix68}.
 When combined with \cite{Ser96}, this gives that in every primitive solvable group
 there exists a set-stabilizer of derived length at most $f(4)=3$.
 We are going to see that a little more is true.

 \subsection{Primitive solvable groups} 
 
Gluck \cite{Glu83} proved that, with finitely many exceptions, a primitive solvable group has a regular orbit on the power set.
 His result has been refined several times, for example in \cite{Ser97,MW04}.
 The following version is tailored for attacking Theorem \ref{thSolvSet}, and its proof is mostly computational.
  
 \begin{lemma} \label{lemPrimGood}
 Let $G \leqslant \Sym(\Omega)$ be a primitive solvable group.
 If $|\Omega| \geq 10$, then $G$ has at least $8$ regular orbits on $\mathcal{P}(\Omega)$.
 If $G$ has less than $5$ regular orbits on $\mathcal{P}(\Omega)$,
 then $G$ is one among the groups in Table \ref{table1}.
 \end{lemma}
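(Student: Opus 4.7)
By the O'Nan--Scott theorem, a primitive solvable group is affine: we may write $G = V \rtimes H$ with $V \cong \mathbb{F}_p^d$ acting regularly on $\Omega$ (so we identify $\Omega$ with $V$, and $|\Omega| = p^d$) and $H \leqslant \GL(V)$ an irreducible solvable subgroup. Let $r(G)$ denote the number of regular orbits of $G$ on $\mathcal{P}(V)$. Writing $\Fix(g)$ for the fixed-point set of $g$ on $V$, Burnside-type counting yields
$$ r(G) \;\geqslant\; \frac{1}{|G|}\Bigl( 2^{|V|} - \sum_{1 \neq g \in G} 2^{|\Fix(g)|} \Bigr). $$
The $|V|-1$ nontrivial translations have no fixed points and together contribute only $|V|-1$ to the inner sum. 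For $g = vh$ with $h \neq 1$, the set $\Fix(g)$ is either empty or a coset of $\ker(h-1)$, hence $|\Fix(g)| \leqslant p^{d-1}$.

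The plan is to reduce to a finite check. First, using the Pálfy--Wolf bound on $|H|$ for irreducible solvable $H \leqslant \GL_d(p)$ together with a crude estimate on the sum $\sum_{1 \neq h \in H} 2^{|V|/p}$, one deduces that $r(G) \geqslant 8$ whenever $p^d$ exceeds a small explicit constant $T$; essentially this is a quantitative rerun of the arguments of Gluck \cite{Glu83} and its refinements in Seress \cite{Ser97} and Moret\'o--Wolf \cite{MW04}. Since one needs to get $T$ down to something like $10$, the crude union bound should be sharpened by exploiting that $H$, being solvable and irreducible, contains an abelian normal subgroup inside a Singer cycle, so many elements of $H$ have small fixed-point space on $V$.

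The remaining primitive solvable groups are those with $|\Omega| = p^d \leqslant T$; they are enumerated from the explicit classification of irreducible solvable subgroups of $\GL_d(p)$ in small rank, equivalently via the GAP library of primitive permutation groups of small degree. For each such $G$ we would compute $r(G)$ directly, either by Burnside's lemma applied to conjugacy class representatives of $G$ or by brute enumeration over $\mathcal{P}(V)$. The groups with $r(G) < 5$ go into Table \ref{table1}, and one verifies that every such $G$ has $|\Omega| \leqslant 9$, which simultaneously establishes both assertions of the lemma.

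The main obstacle is calibrating the preliminary estimate so that the residual list of cases is short enough to be manageable, while still being rigorous; in particular the borderline values $p^d \in \{7, 8, 9\}$, where the solvable irreducible subgroups of $\GL(V)$ are numerous (e.g.\ subgroups of $\GL_2(3)$, $\Gamma L_1(8)$, $\Gamma L_1(9)$, $\GL_2(3) \wr \ldots$ in the $9$-point case) and the ratio $2^{|V|}/|G|$ is small, are delicate to dispose of by hand. This is exactly the computational content alluded to in the statement, and is why the final cross-checking step is best performed on a computer.
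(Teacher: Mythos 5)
Your overall architecture --- an asymptotic estimate showing $r(G)\geq 8$ for all sufficiently large degrees, followed by a finite computer check of the residual small-degree cases --- is the same as the paper's, except that the paper does not reprove the asymptotic part at all: it simply quotes \cite[Lem.~4.1(ii)]{MW04}, which states precisely that a primitive solvable group of degree at least $10$ has at least $8$ regular orbits on the power set, and then runs the \texttt{PrimitiveGroups} library in \cite{GAP4} over degrees $2$--$9$ to produce Table~\ref{table1}. Your GAP step is therefore exactly what the paper does for the second assertion.

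The genuine gap is in your first step. The Burnside-type bound with the crude estimate $|\mathrm{Fix}(vh)|\leq p^{d-1}$ together with the P\'alfy--Wolf bound $|H|\leq 24^{-1/3}|V|^{2.244}$ gives roughly $r(G)\geq \bigl(2^{n}-|G|\,2^{n/2}\bigr)/|G|$ with $|G|\leq n^{3.25}$ or so ($n=p^{d}$), and this is positive only once $2^{n/2}$ dominates $n^{3.25}$, i.e.\ for $n$ around $50$ and beyond. It says nothing about $n=16,25,27,32,49,\dots$, and these are exactly the degrees where the real work of \cite{Glu83}, \cite{Ser97} and \cite{MW04} lives (replacing the union bound by a careful analysis of $\sum_{h}2^{|\mathrm{Fix}(h)|}$ using the normal structure of an irreducible solvable linear group, and treating imprimitive and extraspecial-normalizer cases separately). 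You acknowledge that the calibration is the obstacle, but you do not resolve it, so as written the reduction to a finite check is not established; it must either be carried out in detail or, as in the paper, delegated to the quantitative statement already proved in \cite[Lem.~4.1(ii)]{MW04}. (The prime-degree case $d=1$ is genuinely easy, as you implicitly note, since then $H$ is cyclic and every nontrivial affine map fixes at most one point.)
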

 \begin{proof}
 The first part follows from \cite[Lem. 4.1(ii)]{MW04}.
 The remaining cases have been handled with the library \texttt{PrimitiveGroups} in \cite{GAP4}.
 \end{proof} 
 
   {\tiny
\begin{table}[ht] 
\centering
	
	\begin{tabular}{| c | c c c c |} 
	
		\hline
		
		\phantom{$\dfrac{1^1}{1_{1_1}}$}\hspace{-6mm}
		GAP Id & Regular orbits on $\mathcal{P}(\Omega)$ & $2$-regular orbits on $\mathcal{P}(\Omega)$ 
		& $3$-regular orbits on $\mathcal{P}(\Omega)$ & Good orbits on $\mathcal{P}(\Omega)$ \\
		
		\phantom{$\dfrac{1^1}{1_{1_1}}$}\hspace{-6mm}
		 & $\ell_{2,1}$ & $\ell_{2,2}$ & $\ell_{2,3}$ & $\ell_{2,6}$ \\
		
		\hline \hline
		
		\phantom{$\dfrac{1^1}{1_{1_1}}$}\hspace{-6mm}
		$[2,1]$ & $1$ & $3$ & $3$  & $3$\\
		
		\phantom{$\dfrac{1^1}{1_{1_1}}$}\hspace{-6mm}
		$[3,1]$ & $2$ & $2$ & $4$ & $4$ \\
		
		\phantom{$\dfrac{1^1}{1_{1_1}}$}\hspace{-6mm}
		$[3,2]$ & $0$ & $2$ & $4$ & $4$\\
		
		\phantom{$\dfrac{1^1}{1_{1_1}}$}\hspace{-6mm}
		$[4,1]$ & $0$ & $1$ & $3$ & $5$  \\
		
		\phantom{$\dfrac{1^1}{1_{1_1}}$}\hspace{-6mm}
		$[4,2]$ & $0$ & $1$ & $3$ & $5$ \\
		
		\phantom{$\dfrac{1^1}{1_{1_1}}$}\hspace{-6mm}
		$[5,2]$ & $0$ & $6$  & $6$ & $8$\\
		
		\phantom{$\dfrac{1^1}{1_{1_1}}$}\hspace{-6mm}
		$[5,3]$ & $0$ & $2$ & $2$ & $6$ \\
		
		\phantom{$\dfrac{1^1}{1_{1_1}}$}\hspace{-6mm}
		$[7,2]$ & $2$ & $16$ & $16$ & $16$ \\
		
		\phantom{$\dfrac{1^1}{1_{1_1}}$}\hspace{-6mm}
		$[7,3]$ & $4$ & $4$ & $10$ & $10$ \\
		
		\phantom{$\dfrac{1^1}{1_{1_1}}$}\hspace{-6mm}
		$[7,4]$ & $0$ & $4$ & $6$ & $8$  \\
		
		\phantom{$\dfrac{1^1}{1_{1_1}}$}\hspace{-6mm}
		$[8,1]$ & $3$ & $5$ & $5$ & $6$  \\
		
		\phantom{$\dfrac{1^1}{1_{1_1}}$}\hspace{-6mm}
		$[8,2]$ & $0$ & $0$ & $3$ & $6$ \\
		
		\phantom{$\dfrac{1^1}{1_{1_1}}$}\hspace{-6mm}
		$[9,2]$ & $0$ & $10$ & $10$ & $24$  \\
		
		\phantom{$\dfrac{1^1}{1_{1_1}}$}\hspace{-6mm}
		$[9,3]$ & $4$ & $6$ & $6$ & $14$ \\
		
		\phantom{$\dfrac{1^1}{1_{1_1}}$}\hspace{-6mm}
		$[9,4]$ & $4$ & $8$ & $8$ & $12$ \\
		
		\phantom{$\dfrac{1^1}{1_{1_1}}$}\hspace{-6mm}
		$[9,5]$ & $0$ & $4$ & $4$ & $12$ \\
		
		\phantom{$\dfrac{1^1}{1_{1_1}}$}\hspace{-6mm}
		$[9,6]$ & $0$ & $0$ & $4$ & $10$ \\
		
		\phantom{$\dfrac{1^1}{1_{1_1}}$}\hspace{-6mm}
		$[9,7]$ & $0$ & $0$ & $4$ & $10$ \\
		
		\hline
		
	\end{tabular}
	
	\medskip
	{\caption{Primitive solvable groups with less than $5$ regular orbits on $\mathcal{P}(\Omega)$.} \label{table1} }
\end{table}
} 
 
 As an example to help reading Table \ref{table1}, we remark that \texttt{PrimitiveGroup(4,2)} is $\Sym(4)$.
 There is no regular orbit of $\Sym(4)$ on $\mathcal{P}(\{1,2,3,4\})$,
 the stabilizer of $\{1,2\}$ has two orbits of length $2$,
 and the stabilizers of $\{1\}$ and $\{1,2,3\}$ have orbits of length $3$.
  We also note that \texttt{PrimitiveGroup(8,2)} is $\mathrm{A\Gamma L}_1(8)$, while \texttt{PrimitiveGroup(9,7)} is $\AGL_2(3)$.
 The following result is obtained with similar considerations:
 
 \begin{lemma}
  Let $G \leqslant \Sym(\Omega)$ be a primitive solvable group.
  The following hold:
  \begin{itemize}
  	\item There exists a subset of $\Omega$ whose setwise stabilizer has all orbits on $\Omega$ of length at most $3$;
  \item There exists a subset of $\Omega$ whose setwise stabilizer is metabelian.
  	If $G \neq \AGL_2(3)$, then there exists a subset whose setwise stabilizer is abelian.
  \end{itemize}
 \end{lemma}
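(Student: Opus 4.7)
The plan is to follow the same pattern as Lemma \ref{lemPrimGood}: reduce to a finite list of primitive solvable groups of small degree, and verify each case by direct computation. A trivial setwise stabilizer is abelian and has all orbits of length $1$, so whenever $G$ admits at least one regular orbit on $\mathcal{P}(\Omega)$ both bullets hold for free. By Lemma \ref{lemPrimGood}, this handles all $G$ with $|\Omega| \geq 10$; among the groups with $|\Omega| \leq 9$, it handles every primitive solvable group not appearing in Table \ref{table1}, as well as every row of the table with $\ell_{2,1} \geq 1$.

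What remains is the finite list of groups in Table \ref{table1} with $\ell_{2,1} = 0$, namely the rows indexed by $[3,2]$, $[4,1]$, $[4,2]$, $[5,2]$, $[5,3]$, $[7,4]$, $[8,2]$, $[9,2]$, $[9,5]$, $[9,6]$, $[9,7]$. For the first bullet, one simply reads off the column $\ell_{2,3}$ in Table \ref{table1}: every such row has $\ell_{2,3} \geq 1$, so a subset lying in a $3$-regular orbit exists in each case.

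For the second bullet, I would compute in GAP \cite{GAP4}: enumerate the $G$-orbits on $\mathcal{P}(\Omega)$ for each of these eleven groups, choose an orbit representative, and inspect the derived length of its setwise stabilizer. The expected outcome is that some orbit has abelian setwise stabilizer in every case except $\mathrm{PrimitiveGroup}(9,7) = \AGL_2(3)$, where the minimum derived length attained among non-regular orbits is $2$; one then exhibits explicitly a subset whose setwise stabilizer in $\AGL_2(3)$ is metabelian and verifies that no abelian one exists. This confirms both the positive statement and the necessity of the exception in the second sentence of the second bullet.

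The main obstacle is not conceptual but computational: the eleven direct enumerations in groups of degree at most nine. The single genuine exception $\AGL_2(3)$ is unsurprising, since this group already stands out as an outlier in other small-degree phenomena for primitive solvable groups (for instance, it is the unique row in Table \ref{table1} with $|\Omega|=9$ and $\ell_{2,2}=0$).
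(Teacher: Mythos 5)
Your proposal is correct and follows essentially the same route as the paper: reduce via Lemma \ref{lemPrimGood} (i.e.\ \cite[Lem. 4.1(ii)]{MW04}) to the finitely many primitive solvable groups of small degree lacking a regular orbit on the power set, then settle those by direct computation in \cite{GAP4}, with $\AGL_2(3)$ emerging as the unique exception for the abelian statement. Only your closing aside is slightly off --- $\texttt{PrimitiveGroup(9,6)}$ also has $\ell_{2,2}=0$ in Table \ref{table1}, so $\AGL_2(3)$ is not the unique degree-$9$ row with that property --- but this does not affect the argument.
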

 \begin{proof}
 As before, using \cite[Lem. 4.1(ii)]{MW04}, and the library \texttt{PrimitiveGroups} in \cite{GAP4}.
 We observe that $\mathrm{A\Gamma L}_1(8)$ and \texttt{PrimitiveGroup(9,6)} do not have any $2$-regular orbit on $\mathcal{P}(\Omega)$,
 but still have some abelian set-stabilizer.
 \end{proof}
 
 We conclude this subsection with the following result:
  
  \begin{lemma} \label{lemPrimDist} 
   If $G \leqslant \Sym(\Omega)$ is a primitive solvable group, then there exist disjoint subsets
   $\Delta_1,\Delta_2,\Delta_3$ such that
   $$ \cap_{j=1}^3 \, \StabG(\Delta_j) \> = \> 1 . $$
 \end{lemma}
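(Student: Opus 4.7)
The plan is to split into two cases according to whether $G$ has a regular orbit on $\mathcal{P}(\Omega)$, confining the exceptional case to the short explicit list provided by Lemma~\ref{lemPrimGood}.

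First, suppose $G$ admits at least one regular orbit on $\mathcal{P}(\Omega)$. The map $\Delta \mapsto \Omega \setminus \Delta$ is a $G$-equivariant bijection on $\mathcal{P}(\Omega)$ that preserves the property of lying in a regular orbit, so some regular orbit contains a subset $\Delta_1$ with $|\Delta_1| \leq |\Omega|/2$. Provided $|\Omega| \geq 4$, the complement $\Omega \setminus \Delta_1$ contains two distinct points $y, z$, and we set $\Delta_2 = \{y\}$, $\Delta_3 = \{z\}$. The three subsets are pairwise disjoint and $\bigcap_{j=1}^3 \StabG(\Delta_j) \leq \StabG(\Delta_1) = 1$. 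The primitive solvable groups with $|\Omega| \leq 3$ are $C_2$, $C_3$, and $\Sym(3)$, and for each of them the assertion is immediate upon taking three singletons.

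Second, suppose $G$ has no regular orbit on $\mathcal{P}(\Omega)$. By Lemma~\ref{lemPrimGood}, this forces $|\Omega| \leq 9$, so $G$ appears among the entries of Table~\ref{table1} with $\ell_{2,1} = 0$. For each such $G$ the conclusion can be verified using the \texttt{PrimitiveGroups} library of \cite{GAP4}. The subtlety is concentrated in $\AGL_2(3) = \texttt{PrimitiveGroup}(9,7)$, whose base size is $4$ so that no three disjoint singletons can suffice. In the standard action on $\mathbb{F}_3^2$, however, the triple $\Delta_1 = \{0, e_1\}$, $\Delta_2 = \{e_2\}$, $\Delta_3 = \{e_1 + e_2\}$ works: any affine map $x \mapsto Ax + b$ fixing $e_2$ and $e_1 + e_2$ must satisfy $Ae_1 = e_1$ and $b = e_2 - Ae_2$, and the setwise stabilization of $\{0, e_1\}$ then forces either $b = 0$ (giving $A = I$, hence the identity) or the contradiction $Ae_1 + b = 0$ with $b = e_1$.

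The main obstacle is thus the base-size-$4$ exceptions such as $\AGL_2(3)$: for every other primitive solvable group, three carefully chosen singletons already form a base and the claim is trivial, but the base-size-$4$ cases demand an explicit non-singleton triple together with an ad hoc verification.
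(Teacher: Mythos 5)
Your proposal is correct, but it takes a genuinely different route from the paper: the paper disposes of this lemma with a one-line citation to \cite[Lem. 4.1(iii)]{MW04}, whereas you give a self-contained argument. Your dichotomy is sound: if $G$ has a regular orbit on $\mathcal{P}(\Omega)$, complementation lets you pick a representative $\Delta_1$ of size at most $|\Omega|/2$, and for $|\Omega|\geq 4$ two leftover singletons complete a disjoint triple with trivial intersection of stabilizers; if not, Lemma \ref{lemPrimGood} confines $G$ to the finitely many groups of Table \ref{table1} with $\ell_{2,1}=0$, all of degree at most $9$, where a finite verification (entirely in the spirit of the paper's other computations) finishes the job. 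What your approach buys is transparency -- the reader sees why the statement is true except on a short explicit list -- at the cost of a computer check that the citation to Moret\'o--Wolf avoids. One factual slip: the base size of $\AGL_2(3)$ in its action on $9$ points is $3$, not $4$ (the pointwise stabilizer of $0$, $e_1$, $e_2$ is already trivial, since an affine map fixing $0$ is linear and a linear map fixing a basis is the identity); indeed every group in Table \ref{table1} has base size at most $3$, so three singletons suffice throughout your second case and no ad hoc non-singleton triple is ever needed. This does not damage the proof -- the explicit triple $\{0,e_1\}$, $\{e_2\}$, $\{e_1+e_2\}$ you exhibit is verified correctly and does work -- but the closing paragraph's claim that base-size-$4$ exceptions arise among these groups is mistaken.
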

 \begin{proof}
 This is \cite[Lem. 4.1(iii)]{MW04}.
 \end{proof}

 \subsection{Good colorings}
 
 The standard argument to deal with imprimitive groups is \cite[Th. 2]{Glu83},
 which was used by Gluck to show that permutation groups of odd order have a regular orbit on the power set.
 This does not work with a property such as ``have all orbits of length at most $C$'', where $C$ is a constant.
  We use a different method based on good orbits and partitions that we are going to explain.
  A key observation, already in the lines of the proof of \cite[Th. 12.1]{Bab22},
  is that studying stabilizers of partitions
  can also be useful to obtain results concerning stabilizers of a single subset.
  
 Fix $G \leqslant \Sym(\Omega)$, and let $[j] = \{1,\ldots,j\}$ (the set of colors).
 A {\itshape coloring} is a function $\Omega \to [j]$, and a $k$-coloring is a coloring with at most $k$ colors.
 (This can also be described by the preimages of the colors, which form a partition of $\Omega$.)
 A coloring is {\itshape asymmetric} if only the identity fixes every color.
 We refer the reader to \cite[Sec. 2]{Bab22} for some history concerning breaking-symmetry results and some conflicting terminology.
 
 \begin{definition} 
 For $i \geq 1$, an {\itshape $i$-asymmetric $k$-coloring} is a $k$-coloring
 such that the stabilizer of the coloring in $G$ has all orbits on $\Omega$ of length at most $i$.
 \end{definition} 
 
 The $1$-asymmetric colorings are precisely the genuine asymmetric colorings.
 Unless explicitly stated otherwise, the $6$-asymmetric colorings will also be called {\itshape good} colorings. 
 The total number of $k$-colorings of $\Omega$ is $k^{|\Omega|}$,
 but many of these are equivalent under the action of $G$.
 Fix $G \leqslant \Sym(\Omega)$; for each $k \geq 2$ we write
 $$ \ell_{k,1} , \hspace{0.5cm} \ell_{k,2} , \hspace{0.5cm} \ell_{k,3}, \hspace{0.5cm} \ell_{k,6}, $$
 for the numbers of $G$-orbits of asymmetric, $2$-asymmetric, $3$-asymmetric and good $k$-colorings.
 When necessary, we write $\ell_{k,i}(G)$ to stress the group $G$ we are referring to.
  It is obvious that, for every $k$ and $i \leq j$,
  \begin{equation} \label{eqTrivial}
  \ell_{k+1,i} \geq \ell_{k,i} , \hspace{1cm} \mbox{ and } \hspace{1cm} \ell_{k,j} \geq \ell_{k,i} .
  \end{equation}
 We observe that describing a $2$-coloring is equivalent to describing a subset.
 Therefore, $\ell_{2,1}$, $\ell_{2,2}$, $\ell_{2,3}$ and $\ell_{2,6}$ coincide respectively with
 the numbers of regular, $2$-regular, $3$-regular and good orbits on the power set.
 Restated in terms of colorings, Lemma \ref{lemPrimDist} says that $\ell_{4,1} \geq 1$ for every primitive solvable group
 (one color is needed for the complement).
 
A portion of the proof of Theorem \ref{thSolvSet} relies
  on showing that $\ell_{6-i,i} \geq 5$ for all primitive solvable groups and each $i=1,2,3$.
 The following combinatorial lemma is useful to estimate $\ell_{k,i}$ for large $k$.
 
 \begin{lemma} \label{lemNew} 
 Fix $k \geq 3$ and $i \in \{1,2,3,6\}$.
If $\ell_{2,i} \geq 1$, then $\ell_{k,i} \geq {k \choose 2}$.
If $\ell_{j,i} > \ell_{j-1,i}$ for some $3 \leq j \leq k$, then $\ell_{k,i} \geq {k \choose j}$.
 \end{lemma}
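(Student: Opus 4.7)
My plan is to set up a simple \textbf{relabeling principle} and deduce both parts as direct consequences.

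First I would establish the principle: if $c:\Omega\to[k]$ is an $i$-asymmetric $k$-coloring whose image has exactly $m$ colors, then $c$ contributes at least $\binom{k}{m}$ distinct $G$-orbits to $\ell_{k,i}$. The argument has two ingredients. The stabilizer of $c$ in $G$ depends only on the partition $\{c^{-1}(t):t\in c(\Omega)\}$ of $\Omega$ into color classes, not on which particular labels are attached; hence for any $m$-subset $T\subseteq[k]$ and any bijection $\sigma:c(\Omega)\to T$, the coloring $\sigma\circ c$ has the same stabilizer as $c$, and is therefore $i$-asymmetric. On the other hand, because $G$ acts on $\Omega$ but never on the color set, the image $c(\Omega)\subseteq[k]$ is a $G$-invariant of each coloring; so colorings with distinct images lie in distinct $G$-orbits. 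Varying $T$ over all $\binom{k}{m}$ subsets of $[k]$ of size $m$ produces $\binom{k}{m}$ pairwise-distinct $G$-orbits.

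For the first assertion I would apply the principle to an $i$-asymmetric 2-coloring obtained from the hypothesis $\ell_{2,i}\geq 1$. If it uses both colors, the principle gives $\binom{k}{2}$ orbits directly. In the degenerate case of a constant 2-coloring, the stabilizer is all of $G$, so $G$ itself has all orbits on $\Omega$ of length at most $i$; but then every $k$-coloring is $i$-asymmetric (its stabilizer is a subgroup of $G$), and in particular one may take a 2-coloring that does use both colors and apply the principle.

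For the second assertion, the hypothesis $\ell_{j,i}>\ell_{j-1,i}$ yields a $G$-orbit of $i$-asymmetric $j$-colorings that does not lie in the image of the natural inclusion of $(j-1)$-colorings into $j$-colorings, equivalently an $i$-asymmetric $j$-coloring whose image contains the color $j$. The main obstacle is the final step: strengthening ``image contains color $j$'' to ``image has size exactly $j$'', which is what lets the relabeling principle feed in $m=j$ and yield the desired $\binom{k}{j}$ orbits. I would handle this by choosing $j$ to be minimal with $\ell_{j,i}>\ell_{j-1,i}$: under that minimality no $i$-asymmetric coloring can use strictly fewer than $j$ colors in its image, so the coloring witnessing the jump must use all $j$ distinct colors, and the principle closes the argument.
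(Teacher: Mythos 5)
Your relabeling principle is exactly the paper's argument: its entire proof consists of the two assertions that the stabilizer depends only on the partition into color classes and that each $j$-subset of the palette therefore yields an inequivalent suitable coloring. Your handling of the first assertion, including the degenerate constant coloring, is fine. More importantly, you have put your finger on the one genuinely delicate step, which the paper's proof simply asserts: that a jump $\ell_{j,i}>\ell_{j-1,i}$ produces an $i$-asymmetric coloring using \emph{exactly} $j$ colors. Writing $n_m$ for the number of orbits of $i$-asymmetric colorings whose image has size exactly $m$, one has $\ell_{j,i}=\sum_{m}\binom{j}{m}n_m$, so a jump at $j$ only guarantees $n_m>0$ for some $m\le j$. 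Indeed the second assertion is literally false for non-minimal $j$: for $G=\Sym(2)$ on two points, $\ell_{3,1}=3>1=\ell_{2,1}$, yet no asymmetric coloring uses three colors and $\ell_{6,1}=\binom{6}{2}=15<\binom{6}{3}=20$. (The assertion is correct whenever $j\le|\Omega|$, since splitting a color class of size at least two only shrinks the stabilizer, so $n_m>0$ with $m<j\le|\Omega|$ forces $n_j>0$.)

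Your minimality repair is the right reading and matches how the lemma is consumed downstream (Corollary~\ref{corNew} and Theorem~\ref{thStrong} take $\min_j\binom{k}{j}$ over all candidate jump indices including $j=2$, so only the first jump matters), but as written it still has a hole: minimality of $j$ among indices $\ge 3$ gives only $\ell_{j-1,i}=\ell_{2,i}$, not $\ell_{j-1,i}=0$, so a $2$-color witness could still cause the jump at your minimal $j$. You must additionally assume $\ell_{2,i}=0$, that is, treat ``$\ell_{2,i}\ge 1$'' as a jump at $j=2$ and take $j$ minimal over $j\ge 2$; then $\ell_{j-1,i}=0$ and your conclusion that the witness uses all $j$ colors is justified. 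With that amendment (or with the refinement observation above, which salvages the statement for every $j\le|\Omega|$), your argument is complete and is in substance the paper's proof, made honest.
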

 \begin{proof}
 The strict inequality $\ell_{j,i} > \ell_{j-1,i}$ means that there exists a suitable coloring that involves precisely $j$ colors.
Now suppose we have such a $j$-coloring.
For each subset of cardinality $j$ of the $k$ available colors, we obtain an inequivalent suitable $j$-coloring.
 \end{proof} 
 
 \begin{corollary} \label{corNew}
 Let $A$ be a primitive solvable group. Then $\ell_{5,1}(A) \geq 5$.
 \end{corollary}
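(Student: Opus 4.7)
The plan is to combine Lemma \ref{lemPrimDist} with the combinatorial bounds of Lemma \ref{lemNew} through a short case split on the smallest number of colors that suffices for an asymmetric coloring of $A$. As observed right after the statement of Lemma \ref{lemPrimDist}, every primitive solvable group $A$ already satisfies $\ell_{4,1}(A) \geq 1$: the three pairwise disjoint subsets provided by that lemma, together with their common complement in $\Omega$ (when nonempty), form a partition of $\Omega$ into at most four parts whose setwise stabilizer intersection is trivial. Consequently the smallest $j \in \{2,3,4\}$ with $\ell_{j,1}(A) \geq 1$ is well defined, and I would treat the three possibilities separately.

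If $j = 2$, the first assertion of Lemma \ref{lemNew} gives $\ell_{5,1}(A) \geq \binom{5}{2} = 10$. If $j = 3$, then $\ell_{3,1}(A) \geq 1 > 0 = \ell_{2,1}(A)$, and the second assertion of the same lemma (with $j=3$, $k=5$) yields $\ell_{5,1}(A) \geq \binom{5}{3} = 10$. Finally, if $j = 4$, then $\ell_{4,1}(A) \geq 1 > 0 = \ell_{3,1}(A)$, so Lemma \ref{lemNew} with $j=4$, $k=5$ gives the tight bound $\ell_{5,1}(A) \geq \binom{5}{4} = 5$. In all three cases the required inequality holds.

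No genuine difficulty is expected here: the statement is tuned precisely so that the combinatorial amplification of Lemma \ref{lemNew} meets the existence guarantee of Lemma \ref{lemPrimDist} at exactly $\binom{5}{4} = 5$. The only minor point to verify is that the partition produced by Lemma \ref{lemPrimDist} really contributes to $\ell_{4,1}(A)$ as asserted, which follows from the fact that $G$ permutes $\Omega$ but not the palette of colors, so different assignments of the four labels to the (at most four) parts give $G$-inequivalent colorings, each still asymmetric since their common stabilizer is $\bigcap_j \Stab_G(\Delta_j)=1$.
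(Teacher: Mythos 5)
Your argument is correct and is essentially identical to the paper's proof: both deduce $\ell_{4,1}(A)\geq 1$ from Lemma \ref{lemPrimDist}, split according to the smallest number of colors realizing an asymmetric coloring, and apply Lemma \ref{lemNew} with $k=5$ to get $\ell_{5,1}(A)\geq\min_{j=2,3,4}\binom{5}{j}=5$. Nothing further is needed.
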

 \begin{proof}
 Fix $A$. We have $\ell_{4,1}(A) \geq 1$ by Lemma \ref{lemPrimDist}.
 This means that $\ell_{2,1}(A) \geq 1$, or the inequality $\ell_{j,1}(A) > \ell_{j-1,1}(A)$ occurs at some index $j=3,4$.
 Considering the worst case, we obtain
 $$ \ell_{5,1}(A) \geq \min_{j = 2,3,4} {5 \choose j}  = 5 , $$ 
 where we used Lemma \ref{lemNew} with $k=5$, $i=1$.
 \end{proof}
 
 The next result concerns $3$-colorings.
 
 \begin{lemma} \label{lemComp} 
 Let $A$ be a primitive solvable group.
 If $A \neq 1$, then $\ell_{3,2}(A) \geq 5$.
 \end{lemma}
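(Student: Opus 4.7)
The plan is to split according to the degree $n = |\Omega|$. For $n \geq 10$, I would invoke Lemma \ref{lemPrimGood} to obtain $\ell_{2,1}(A) \geq 8$; then the monotonicity inequalities in (\ref{eqTrivial}) give
\[ \ell_{3,2}(A) \geq \ell_{2,2}(A) \geq \ell_{2,1}(A) \geq 8 \geq 5, \]
as desired. This reduces the problem to the finitely many primitive solvable groups of degree at most $9$, all accessible via the \texttt{PrimitiveGroups} library in \cite{GAP4}.

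Within this finite collection, the low-degree groups that do not appear in Table \ref{table1} already satisfy $\ell_{2,1}(A) \geq 5$ by construction of the table, so the same chain of inequalities closes them at once. Of the eighteen groups listed in Table \ref{table1}, the six satisfying $\ell_{2,2}(A) \geq 5$ are handled in the same way. The remaining twelve groups, ranging from $\Sym(2)$ up to $\AGL_2(3)$, must be checked individually by enumerating representatives of $A$-orbits on the $3^n$ colorings of $\Omega$ in GAP and testing, for each representative, whether its stabilizer acts with all orbits of length at most $2$.

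One might hope to avoid part of this case-by-case work via Lemma \ref{lemNew}, but with $k=3$ and $i=2$ the lemma only produces $\ell_{3,2}(A) \geq \binom{3}{2} = 3$ whenever $\ell_{2,2}(A) \geq 1$; the jump from $3$ to $5$ is exactly what forces a direct computation. The main obstacle is the three hardest groups, namely $\mathrm{A\Gamma L}_1(8)$ and the two primitive groups of degree $9$ including $\AGL_2(3)$, all of which satisfy $\ell_{2,2}(A) = 0$. For these, every one of the five required $3$-colorings must genuinely use all three colors, so the content of the lemma in these cases is precisely that the extra color buys enough asymmetry, a claim I would only trust after confirming it in GAP.
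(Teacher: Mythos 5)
Your proposal is correct and follows essentially the same route as the paper: reduce to degree at most $9$ via Lemma \ref{lemPrimGood} and the monotonicity inequalities (\ref{eqTrivial}), then finish with a direct GAP enumeration of orbits on $3$-colorings for the leftover small-degree groups. The only organizational difference is that the paper shortens the final check by noting that $\ell_{3,2}(A)\geq\ell_{3,2}(A')$ whenever $A\leqslant A'$, so it suffices to verify the seven maximal solvable primitive groups of degrees $2$ through $9$ rather than the twelve remaining entries of Table \ref{table1} individually.
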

 \begin{proof}
 Let $\Omega$ be the base set, so that $|\Omega| \geq 2$.
 By Lemma \ref{lemPrimGood} and (\ref{eqTrivial}), we can assume $|\Omega| \leq 9$.
 Since we are interested in a lower bound on the number of orbits, it is enough to deal with the solvable (over)groups
 $$ \Sym(2) , \hspace{0.2cm} \Sym(3) , \hspace{0.2cm} \Sym(4) , \hspace{0.2cm} \mathrm{AGL}_1(5) , \hspace{0.2cm} \mathrm{AGL}_1(7) ,
  \hspace{0.2cm} \mathrm{A\Gamma L}_1(8) , \hspace{0.2cm}  \mathrm{AGL}_2(3)  . $$ 
 It is easy to see that $\ell_{3,2}$ is at least $5$ (actually $6$) in all these cases.
 In particular, using \cite{GAP4}, the command
 \begin{center}
 \texttt{OrbitsDomain( A , Tuples([1..k], NrMovedPoints(A)) , Permuted )} 
 \end{center}
 returns the orbits of $A$ on the $k$-colorings of $\Omega$.
 \end{proof}

  \subsection{Imprimitive groups} \label{sec2.3}
  
  We now explain the general strategy to deal with colorings of imprimitive groups.
 Let $A$ and $B$ be permutation groups on the finite sets $\Omega_1$ and $\Omega_2$ respectively.
 Let $G \leqslant B \wr_{|\Omega_1|} A$ act imprimitively on $\Omega = \Omega_1 \times \Omega_2$.
 Let $x,y \geq 2$ be integers.
  Let $X = (X_i)_{i=1}^x$ be a $x$-coloring of $\Omega_1$, namely $\Omega_1 = \sqcup_{i = 1}^x X_i$.
 Moreover, suppose we have $x$ inequivalent $y$-colorings of $\Omega_2$, say $Y_1,\ldots,Y_x$.
 In particular, for each $i = 1,\ldots,x$, we have that $\Omega_2 = \sqcup_{j=1}^y Y_{i,j}$.
 The point is to use the $y$-colorings of $\Omega_2$ as colors for $\Omega_1$.
 (In fact, we intentionally used the same index $i$ to count the colors in $X$ and the colorings of $\Omega_2$).
 We will construct a new $y$-coloring of $\Omega$, say $Z = (Z_j)_{j=1}^y$, as follows.
 For each $j= 1,\ldots y$, define
 $$ Z_j \> = \>
 \{ (\omega_1,\omega_2) \in \Omega : 
 \omega_1 \in X_i \mbox{ and } \omega_2 \in Y_{i,j} \mbox{ for some fixed } i \} . $$
 It is easy to see that $(Z_j)_{j=1}^y$ is a partition of $\Omega$.
 
 \begin{lemma} \label{lemStabEmb}
 The stabilizer of the coloring $Z$ in $G$ can be embedded in the semidirect product
 between the direct product of the stabilizers of the colorings $Y_i$ in $B$
 and the stabilizer of the coloring $X$ in $A$.
 More precisely,
 $$ \cap_j \StabG(Z_j) \> \leqslant \> 
 \left( \prod_i \left( \cap_j \StabB(Y_{i,j}) \right) \right) \rtimes \left(\cap_i \StabA(X_i) \right) . $$
 \end{lemma}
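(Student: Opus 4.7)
The plan is to analyze how an element of the wreath product can preserve each color class $Z_j$, by restricting attention to individual fibers over $\Omega_1$ and exploiting that $Y_1,\ldots,Y_x$ are pairwise inequivalent colorings of $\Omega_2$ under $B$.

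First, I would set up coordinates: write a generic element of $B \wr_{|\Omega_1|} A$ as $g=(b;a)$ with $b=(b_{\omega_1})_{\omega_1 \in \Omega_1} \in B^{\Omega_1}$ and $a \in A$, acting on $\Omega$ by $(\omega_1,\omega_2)^g = (\omega_1^a, \omega_2^{b_{\omega_1}})$. The essential feature of $Z$ is its fiber description, immediate from the construction: for $\omega_1 \in X_i$, the coloring induced by $Z$ on the fiber $\{\omega_1\}\times \Omega_2$ is exactly $Y_i$; that is, $(\omega_1,\omega_2) \in Z_j$ if and only if $\omega_2 \in Y_{i,j}$.

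Next, suppose $g=(b;a)$ lies in $\cap_j \StabG(Z_j)$, and let $i(\omega_1)$ denote the index with $\omega_1 \in X_{i(\omega_1)}$. Applying $g$ carries the fiber over $\omega_1$ to the fiber over $\omega_1^a$, translating the source coloring $Y_{i(\omega_1)}$ by $b_{\omega_1}$; for $Z$ to be preserved, the image must coincide with the target coloring $Y_{i(\omega_1^a)}$. Since $Y_1,\ldots,Y_x$ lie in distinct $B$-orbits of colorings of $\Omega_2$, this forces $i(\omega_1^a)=i(\omega_1)$ for every $\omega_1$; equivalently, $a \in \cap_i \StabA(X_i)$, while simultaneously $b_{\omega_1} \in \cap_j \StabB(Y_{i(\omega_1),j})$ for each $\omega_1$.

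Putting these two constraints together, $g$ sits in the semidirect product whose base factor is $\prod_{\omega_1 \in \Omega_1} \bigl(\cap_j \StabB(Y_{i(\omega_1),j})\bigr)$ (this is what the statement abbreviates by $\prod_i(\cap_j\StabB(Y_{i,j}))$, grouping the $B$-copies according to the color class $X_i$ of the base point), with top factor $\cap_i \StabA(X_i)$ acting by permutation of the $|\Omega_1|$ coordinates. The main — indeed only — conceptual step is the invocation of the pairwise inequivalence of the $Y_i$ to prevent the top component from mixing different parts of $X$; without this hypothesis the embedding breaks, because an $a \in A$ could carry one color class of $X$ to another and readjust by a suitable $b_{\omega_1}$. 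Once this point is handled, the remainder is routine wreath-product bookkeeping.
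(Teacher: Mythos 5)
Your proposal is correct and follows essentially the same route as the paper: the paper's (much terser) proof likewise reduces to the full wreath product $B \wr_{|\Omega_1|} A$ and observes that the inequivalence of the $Y_i$ forces the top component to preserve the partition $X$ while each base component stabilizes the relevant $Y_{i}$. Your fiberwise write-up just makes explicit the bookkeeping that the paper leaves to the reader.
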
 
 \begin{proof}
 First suppose $G=B \wr_{|\Omega_1|} A$.
 Since the colorings of $\Omega_2$ are inequivalent, the stabilizer of $Z$ is precisely the above semidirect product.
 The general case follows.
 \end{proof} 
 
 \begin{lemma} \label{lemIneq}
  Fix $Y_1,\ldots,Y_x$ colorings of $\Omega_2$, and let $X,X'$ be colorings of $\Omega_1$.
 Let $Z$ and $Z'$ be the colorings of $\Omega_1 \times \Omega_2$ obtained from $X$ and $X'$ respectively.
 If $X$ and $X'$ are inequivalent, then $Z$ and $Z'$ are inequivalent.
 \end{lemma}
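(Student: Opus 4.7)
The plan is to argue by contrapositive: assume that $Z^g = Z'$ for some $g \in G$, and show that then $X$ and $X'$ are $A$-equivalent. The argument is local, working one fiber of the product $\Omega_1 \times \Omega_2$ at a time.

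Writing $g$ in wreath product coordinates as $g = (b_1, \ldots, b_{|\Omega_1|}; a)$ with $a \in A$ and $b_{\omega_1} \in B$, the element $g$ sends $(\omega_1,\omega_2)$ to $(\omega_1^a, \omega_2^{b_{\omega_1}})$. By the very definition of $Z$, the fiber $\{\omega_1\} \times \Omega_2$ inherits its $Z$-coloring from $Y_i$ whenever $\omega_1 \in X_i$; likewise, if $\omega_1^a \in X'_{i'}$, the fiber $\{\omega_1^a\} \times \Omega_2$ inherits its $Z'$-coloring from $Y_{i'}$. Since $g$ carries the first fiber onto the second via $b_{\omega_1}$ while matching $Z$-colors with $Z'$-colors, this yields the identity $Y_i^{b_{\omega_1}} = Y_{i'}$ as colorings of $\Omega_2$.

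Here is where the hypothesis that $Y_1, \ldots, Y_x$ are pairwise inequivalent under $B$ enters: the equation $Y_i^{b_{\omega_1}} = Y_{i'}$ forces $i = i'$. Letting $\omega_1$ range over $\Omega_1$ then gives $X_i^a \subseteq X'_i$ for every $i$, and since both $X$ and $X'$ are partitions of $\Omega_1$ we conclude $X_i^a = X'_i$ for each $i$. Hence $X^a = X'$, so $X$ and $X'$ are $A$-equivalent, as required.

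The only delicate point is bookkeeping: getting the correspondence right between the local action of $b_{\omega_1}$ on a single fiber and the induced transformation of the fiberwise coloring $Y_i$. Once that translation between wreath coordinates and the construction of $Z$ is set up carefully, the conclusion follows in one line from the pairwise inequivalence of the $Y_i$'s.
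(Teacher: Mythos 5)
Your proof is correct and is essentially the paper's argument: the paper also works by contrapositive, observing that an element of $B \wr_{|\Omega_1|} A$ carrying $Z$ to $Z'$ must match the colorings induced on $\Omega_1$ (which are exactly $X$ and $X'$, well-defined precisely because the $Y_i$ are pairwise $B$-inequivalent). You have simply unpacked this fiber by fiber in wreath coordinates; no substantive difference.
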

 \begin{proof} 
 If $Z$ and $Z'$ lies in the same orbit of $B \wr_{|\Omega_1|} A$,
 then the induced colorings of $Z$ and $Z'$ on $\Omega_1$ need to be in the same orbit of $A$.
 But these restrictions are precisely $X$ and $X'$, and the proof follows.
 \end{proof}
 
 We are ready to prove Theorem \ref{thSolvSet}.
 We actually prove the following stronger statement, that lends itself to an inductive argument:
 
  \begin{theorem} \label{thStrong}
 Let $G \leqslant \Sym(\Omega)$ be a solvable permutation group.
If $|\Omega| \geq 2$, then there are at least $3$ good orbits on $\mathcal{P}(\Omega)$.
  If there are less than $5$ good orbits on $\mathcal{P}(\Omega)$, then $|\Omega| \leq 3$.
 \end{theorem}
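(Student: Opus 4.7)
The plan is strong induction on $|\Omega|$. The base cases $|\Omega| \in \{2,3\}$ follow by inspection: every orbit on $\mathcal{P}(\Omega)$ is automatically good, since any stabilizer acts on a set of size at most $3$ with orbits of length at most $3 \leq 6$, and the total orbit count on $\mathcal{P}(\Omega)$ is at least $|\Omega|+1 \geq 3$ by Remark \ref{remTotOrbits}. For the inductive step, assume $|\Omega| \geq 4$, so we must produce at least $5$ good orbits. I would first reduce to the case where $G$ is transitive (handling intransitive $G$ by restricting to the $G$-orbits and pasting) and then split into primitive and imprimitive subcases.

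For the primitive subcase, Lemma \ref{lemPrimGood} handles $|\Omega| \geq 10$ (giving at least $8$ regular orbits, hence at least $5$ good ones), while Table \ref{table1} directly verifies that the ``good orbits'' column is $\geq 5$ in every remaining small primitive solvable example. For the imprimitive subcase, pick a minimal block system identifying $\Omega \cong \Omega_1 \times \Omega_2$ with $|\Omega_1|,|\Omega_2| \geq 2$; let $A$ be the induced primitive solvable action on blocks and $B$ the induced solvable action on a single block. The paste construction of Section \ref{sec2.3} takes an $x$-coloring $X$ of $\Omega_1$ together with $x$ inequivalent $2$-colorings $Y_1,\ldots,Y_x$ of $\Omega_2$ and returns a $2$-coloring $Z$ of $\Omega$; Lemma \ref{lemStabEmb} bounds $\StabG(Z)$ by a semidirect product of stabilizers, so if $X$ is $a$-asymmetric under $A$ and every $Y_i$ is $b$-asymmetric under $B$ with $ab \leq 6$, then $Z$ is good, and Lemma \ref{lemIneq} ensures inequivalent $X$'s produce inequivalent $Z$'s.

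It then remains to exhibit, in each regime, at least five inequivalent $X$'s together with matching tuples $(Y_i)$. The three natural balances are $(a,b,x) = (1,6,5)$, $(2,3,\cdot)$, and $(3,2,\cdot)$. Corollary \ref{corNew} supplies $\ell_{5,1}(A) \geq 5$ for the first route, and Lemma \ref{lemComp} supplies $\ell_{3,2}(A) \geq 5$ (when $A \neq 1$) for the second. On the $\Omega_2$ side, induction gives $\ell_{2,6}(B) \geq 3$, which by Lemma \ref{lemNew} yields $\ell_{k,6}(B) \geq \binom{k}{2}$ for $k \geq 3$, providing abundantly many $Y_i$'s of type $b=6$ whenever $|\Omega_2| \geq 4$.

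The main obstacle is the regime $|\Omega_2| \in \{2,3\}$, where induction only guarantees $\ell_{2,6}(B) \geq 3$ and the clean route $(1,6,5)$ breaks down; here one must combine the $(2,3,\cdot)$ and $(3,2,\cdot)$ routes, which calls for supplementary counts of $\ell_{2,2}(B)$ and $\ell_{2,3}(B)$ on the $B$ side together with analogues of Lemma \ref{lemComp} (for stronger asymmetry or more colors) on the $A$ side, and the tiniest residual configurations (notably $|\Omega_1|=|\Omega_2|=2$) will need to be checked by hand. A secondary delicate point is the sharpness clause: to conclude that fewer than $5$ good orbits forces $|\Omega| \leq 3$, one must verify that the construction produces five \emph{genuinely distinct} $G$-orbits in every remaining imprimitive configuration with $|\Omega| \geq 4$, ruling out accidental collapses between the constructed $Z$'s.
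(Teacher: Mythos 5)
Your plan is essentially the paper's proof: induction on $|\Omega|$, reduction to the transitive case, primitive base case via Lemma \ref{lemPrimGood} and Table \ref{table1}, and the paste construction of Subsection \ref{sec2.3} with exactly the three balances $(1,6)$, $(2,3)$, $(3,2)$ that the paper uses. Two points deserve correction. First, your invocation of Lemma \ref{lemNew} to get $\ell_{k,6}(B)\geq\binom{k}{2}$ is a misstep: that produces good $k$-colorings of $\Omega_2$ with $k\geq 3$, whereas the construction needs $x$ inequivalent good \emph{subsets} (i.e., $2$-colorings) of $\Omega_2$ so that the output $Z$ is itself a subset of $\Omega$. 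The correct source of the five required $Y_i$'s when $|\Omega_2|\geq 4$ is simply the inductive hypothesis in its strengthened form, $\ell_{2,6}(B)\geq 5$; no boosting is needed. Second, the regime $|\Omega_2|\in\{2,3\}$ that you flag as the main obstacle does not require any new analogue of Lemma \ref{lemComp}: one takes $\ell_{2,2}(\Sym(2))=3$ (so $x=3$, orbit lengths at most $2\cdot 3=6$ using $3$-asymmetric $3$-colorings of $\Omega_1$) and $\ell_{2,3}(\Sym(3))=4$ (so $x=4$, orbit lengths at most $3\cdot 2=6$ using $2$-asymmetric $4$-colorings of $\Omega_1$), and the needed counts $\ell_{3,3}(A)\geq 5$ and $\ell_{4,2}(A)\geq 5$ both follow from $\ell_{3,2}(A)\geq 5$ (Lemma \ref{lemComp}) via the monotonicity (\ref{eqTrivial}); this covers $|\Omega_1|=2$ as well, so no residual hand-checking is required. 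Your worry about accidental collapses is disposed of by Lemma \ref{lemIneq}, since inequivalent colorings of $\Omega_1$ yield inequivalent colorings of $\Omega$. With these repairs your outline becomes the paper's argument.
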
 
\begin{proof} 
We work by induction on the cardinality of $\Omega$.
Suppose that $\Omega_1,\ldots,\Omega_t$ are the orbits of $G$ on $\Omega,$
and that we can find subsets $\Delta_j \subseteq \Omega_j$ such that $\Stab_G(\Delta_j)$
has orbits of length at most $6$ on $\Omega_j$, for each $j=1,\ldots,t$.
Then $\Stab_G( \sqcup_j \> \Delta_j )$ will surely work.
It follows that we can suppose that $G$ is transitive.
The base cases for the induction, primitive groups, can be handled with Lemma \ref{lemPrimGood} and Table \ref{table1}.
 Let $\Omega = \Omega_1 \times \Omega_2$, with $|\Omega_1| \geq 2$.
 Let $G \leqslant B \wr_{|\Omega_1|} A$, with $A$ primitive on $\Omega_1$, and $B$ transitive on $\Omega_2$.
 By the inductive hypothesis applied to $\Omega_2$, we are in one of the following cases:
 
 \begin{itemize}
 \item \underline{$B$ has at least $5$ good orbits on $\mathcal{P}(\Omega_2)$}.
 
 Since the goal is good $2$-colorings, we will use the construction above with $y=2$.
 In this case, set $x=5$.
 Let $\Delta_1, \ldots , \Delta_5 \subseteq \Omega_2$ be such that
 $\Delta_1^B,\ldots,\Delta_5^B$ are distinct good orbits of $B$ on $\mathcal{P}(\Omega_2)$.
 For each asymmetric $5$-coloring of $\Omega_1$, say $X = (X_i)_{i = 1}^5$,
 we construct a good orbit $\Sigma^G$ for $G$ as follows:
 $$ \Sigma \> = \> \{ (\omega_1,\omega_2) \in \Omega_1 \times \Omega_2 : 
 \omega_1 \in X_i \mbox{ and } \omega_2 \in \Delta_i \mbox{ for some fixed } i  \} . $$ 
 In fact, by Lemma \ref{lemStabEmb}, the set-stabilizer of $\Sigma$ in $G$ can be embedded in the semidirect product
 between the direct product of the set-stabilizers of the $\Delta_i$'s in $B$ (which have orbits of length at most $6$),
 and the stabilizer of the asymmetric $5$-coloring in $A$ (which is trivial).
 It follows from Lemma \ref{lemIneq} that there are at least $\ell_{5,1}(A)$ good orbits of $G$.
 The proof of this part is completed by Corollary \ref{corNew}.
 
   \item \underline{$B \cong \Sym(2)$}.
   
   Here $\ell_{2,2}(B)=3$.
   Using the $2$-asymmetric orbits of $B$, and the $3$-asymmetric $3$-colorings of $\Omega_1$, arguing as before
we can construct $\ell_{3,3}(A)$ good orbits of $G$ (set $x=3$, $y=2$ in the construction above).
In fact, note that orbit lengths are multiplied in the imprimitive action, so the length of each orbit will be bounded by $2 \cdot 3 =6$.
  Now $\ell_{3,3}(A) \geq \ell_{3,2}(A) \geq 5$ by (\ref{eqTrivial}) and Lemma \ref{lemComp}. 
 
  \item \underline{$B \leqslant \Sym(3)$}.
  
  We can assume $B \cong \Sym(3)$, so that $\ell_{2,3}(B) = 4$.
  Using the $3$-asymmetric orbits of $B$, and the $2$-asymmetric $4$-colorings of $\Omega_1$, as before
we can construct $\ell_{4,2}(A)$ good orbits of $G$ (set $x=4$, $y=2$ in the construction above).
The length of each orbit will be bounded by $3 \cdot 2 =6$.
  Finally, $\ell_{4,2}(A) \geq \ell_{3,2}(A) \geq 5$ by (\ref{eqTrivial}) and Lemma \ref{lemComp}.
 \end{itemize} 

The proof is complete.
\end{proof} 

The cardinality of the subset in Theorem \ref{thSolvSet} grows linearly with the cardinality of $\Omega$,
 which is necessary as shown by $B \wr K$,
 when $B$ has some orbit of length exceeding $6$, and the degree of $K$ grows.

\begin{proof}[Proof of Corollary \ref{corMain}]
Let $\Delta \subseteq \Omega$ be as provided by Theorem \ref{thSolvSet}.
Then $\Stab_G(\Delta)$ can be embedded in a direct product of copies of $\Sym(6)$.
Every solvable subgroup of $\Sym(6)$ has derived length at most $3$, and the proof follows.
\end{proof}

The next example provides a transitive solvable group where no subset gives a metabelian stabilizer.

 \begin{example} \label{exBad}
   Let $B = \Sym(4) \wr_4 \Sym(4)$ act imprimitively on $\Omega = \{a,b,c,d\} \times \{1,2,3,4\}$.
   It is not hard to see that $B$ has a unique orbit on the power set whose set-stabilizer is metabelian,
   namely $\Delta^B$, where
   $$ \Delta \> = \> \{ (a,1) , (b,1) ,  (b,2) , (b,3) ,  (c,1) , (c,2) ,  (d,1) , (d,2) \} . $$
   The set-stabilizer $H$ of $\Delta$ in $B$ is isomorphic to $\Sym(3) \times \Sym(3) \times (C_2 \times C_2) \wr_2 C_2$.
   Now let $G = B \wr_2 \Sym(2)$ act on $\Omega \times \{ \alpha , \beta \}$.
   If we color both $\Omega \times \{ \alpha \}$ and $\Omega \times \{ \beta \}$ using $\Delta$,
   then the resulting set-stabilizer will be isomorphic to $H \wr_2 \Sym(2)$, whose derived length is $3$.
   Otherwise, we are forced to color a subset, say of $\Omega \times \{ \alpha \}$,
   which does not provide a metabelian set-stabilizer in $B$.
   So there is no hope of getting a metabelian set-stabilizer in $G$.
   \end{example}

Since $\Sym(3) \wr_2 \Sym(2)$ is not supersolvable, neither is $H \wr_2 \Sym(2)$,
   and the same example shows that supersolvable set-stabilizers need not to exist.
   This is an improvement on \cite[Ex. 3.4]{Glu25}, which deals with nilpotent set-stabilizers.
The next similar construction involves $\mathrm{A\Gamma L}_1(8)$,
and provides a transitive solvable group where no subset gives a stabilizer with all orbits of length at most $5$.
We observe that, if a finite group acts on a set (even if not transitively), then the length of each orbit has to divide the order of the group.
So, for example, $\mathrm{A\Gamma L}_1(8)$ does not have any orbit of length $5$.

\begin{example} \label{exBad2}
	We first notice that $\mathrm{A\Gamma L}_1(8)$ has precisely $4$ orbits on the power set whose stabilizers have orbits of length at most $4$,
	and $4$ is attained by one of these orbits (a crucial fact is that $\ell_{2,3}(\mathrm{A\Gamma L}_1(8)) =3$ in Table \ref{table1}).
   Let $B = \mathrm{A\Gamma L}_1(8) \wr_4 \Sym(4)$ act imprimitively on a set $\Omega$ of cardinality $32$.
   It is not hard to see that $B$ has a unique orbit $\Delta^B$ on the power set where the set-stabilizer has orbits of length at most $5$ (actually $4$).
   Now let $G = B \wr_2 \Sym(2)$ act on $\Omega \times \{ \alpha , \beta \}$.
   If we color both $\Omega \times \{ \alpha \}$ and $\Omega \times \{ \beta \}$ using $\Delta$,
   then the resulting set-stabilizer will have an orbit of length $4 \cdot 2 =8$.
   Otherwise, we are forced to color a subset, say of $\Omega \times \{ \alpha \}$,
   which does not provide a set-stabilizer (in $B$) with all orbits of length at most $5$.
   So there is no hope of getting such a stabilizer in $G$.
   \end{example}

\subsection{A result about $3$-colorings}

With the same ideas developed in the proof of Theorem \ref{thStrong},
we obtain the following powerful result concerning $3$-colorings.

\begin{theorem} \label{th3Col}
Let $G$ be a solvable permutation group on the finite set $\Omega$, $|\Omega| \geq 2$.
Then there exist at least $5$ orbits of $3$-colorings of $\Omega$ whose stabilizers have all orbits of length at most $2$.
\end{theorem}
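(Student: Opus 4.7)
The plan is to prove that $\ell_{3,2}(G) \geq 5$ by induction on $|\Omega|$, closely following the blueprint of Theorem \ref{thStrong}. The base cases will be small $|\Omega|$ (or directly the primitive cases); the inductive step splits into an intransitive reduction, a transitive primitive case handled by Lemma \ref{lemComp}, and a transitive imprimitive case handled by the construction of Section \ref{sec2.3}.

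First, I would reduce to the transitive case. If $G$ has an orbit $\Omega_1$ of size at least $2$, I apply the inductive hypothesis to $G|_{\Omega_1}$ to obtain $5$ inequivalent $3$-colorings $Y^{(1)},\ldots,Y^{(5)}$ of $\Omega_1$ whose $G$-stabilizers induce orbits of length at most $2$ on $\Omega_1$. Each is extended to all of $\Omega$ by a fixed $3$-coloring $W$ of $\Omega_2 := \Omega \setminus \Omega_1$ whose stabilizer has orbits of length at most $2$ on $\Omega_2$; such a $W$ exists by induction on the smaller set $\Omega_2$, or trivially when $\Omega_2$ is empty or a union of $G$-singletons. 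Since $\Omega_1$ and $\Omega_2$ are $G$-invariant, the stabilizer of the combined coloring is contained in $\StabG(Y^{(k)}) \cap \StabG(W)$, so its orbits on $\Omega$ have length at most $2$. Inequivalence of the five extended colorings follows at once from inequivalence of their restrictions to $\Omega_1$. The remaining sub-case of the intransitive reduction, namely $G=1$ with $|\Omega|\geq 2$, is immediate since all $3^{|\Omega|}\geq 9$ colorings lie in distinct orbits.

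Next, when $G$ is transitive and primitive, Lemma \ref{lemComp} delivers $\ell_{3,2}(G) \geq 5$ directly. The main case is transitive and imprimitive, where I write $\Omega = \Omega_1 \times \Omega_2$ and $G \leqslant B \wr_{|\Omega_1|} A$ with $A$ primitive solvable on $\Omega_1$ and $B$ transitive solvable on $\Omega_2$, with $|\Omega_1|,|\Omega_2| \geq 2$. The inductive hypothesis applied to $B$ gives $5$ inequivalent $3$-colorings $Y_1,\ldots,Y_5$ of $\Omega_2$ with $\Stab_B(Y_i)$ inducing orbits of length at most $2$ on $\Omega_2$. Corollary \ref{corNew} provides $5$ inequivalent asymmetric $5$-colorings $X^{(1)},\ldots,X^{(5)}$ of $\Omega_1$. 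Using the construction of Section \ref{sec2.3} with $x=5$ and $y=3$, each pair $(X^{(k)},(Y_1,\ldots,Y_5))$ yields a $3$-coloring $Z^{(k)}$ of $\Omega$. By Lemma \ref{lemStabEmb}, $\StabG(Z^{(k)})$ embeds into
$$ \Bigl(\prod_i \Stab_B(Y_i)\Bigr) \rtimes \StabA(X^{(k)}) . $$
Since $X^{(k)}$ is asymmetric, $\StabA(X^{(k)})=1$, so $\StabG(Z^{(k)})$ lies inside the base group and preserves each fiber $\{\omega_1\}\times \Omega_2$; on the fiber through $\omega_1 \in X_i^{(k)}$ it acts via $\Stab_B(Y_i)$, whose orbits on $\Omega_2$ have length at most $2$. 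Hence every orbit of $\StabG(Z^{(k)})$ on $\Omega$ has length at most $2$. Lemma \ref{lemIneq} guarantees that the $Z^{(k)}$ are pairwise $G$-inequivalent, producing $5$ distinct orbits.

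The principal obstacle is that orbit lengths multiply across the imprimitive action, so to secure the tight bound $2$ on $\Omega$ one of the two factors must contribute trivial orbits. This forces the use of \emph{asymmetric} colorings of $\Omega_1$ (not merely $2$-asymmetric ones), which is precisely why Corollary \ref{corNew} is invoked; the fact that both $\ell_{5,1}(A)\geq 5$ and $\ell_{3,2}(B)\geq 5$ gives exactly the five orbits claimed, with no slack to spare.
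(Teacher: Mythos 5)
Your proposal is correct and follows essentially the same route as the paper: induction on $|\Omega|$ with the primitive base case settled by Lemma \ref{lemComp}, and the imprimitive step carried out via the Subsection \ref{sec2.3} construction with $x=5$, $y=3$, using the five $2$-asymmetric $3$-colorings of $\Omega_2$ from the inductive hypothesis as colors together with the asymmetric $5$-colorings of $\Omega_1$ supplied by Corollary \ref{corNew}. The only difference is that you spell out the intransitive reduction and the $G=1$ case in more detail than the paper does.
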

\begin{proof}
We have to prove that $\ell_{3,2}(G) \geq 5$.
We work by induction on $|\Omega|$, and we can suppose that $G$ is transitive.
Also, primitive groups can be handled with Lemma \ref{lemComp}.
 Let $\Omega = \Omega_1 \times \Omega_2$, with $|\Omega_1| \geq 2$.
 Let $G \leqslant B \wr_{|\Omega_1|} A$, with $A$ primitive on $\Omega_1$, and $B$ transitive on $\Omega_2$.
 By the inductive hypothesis, we have that $\ell_{3,2}(B) \geq 5$.
 We argue as in Subsection \ref{sec2.3}, with $x=5$ and $y=3$.
 Using the $2$-asymmetric $3$-colorings of $\Omega_2$ as colors for $\Omega_1$,
 and the asymmetric $5$-colorings of $\Omega_1$,
 we can construct $\ell_{5,1}(A)$ inequivalent $2$-asymmetric $3$-colorings of $\Omega$.
 We have $\ell_{5,1}(A) \geq 5$ from Corollary \ref{corNew},
 which makes the induction smooth and completes the proof.
\end{proof} 

The correspondent structural result answers \cite[Quest. 7c]{Bab22}.

\begin{corollary} \label{corAbel}
Let $G$ be a solvable permutation group on the finite set $\Omega$.
Then there exists a $3$-coloring of $\Omega$ whose stabilizer is an elementary abelian $2$-group.
\end{corollary}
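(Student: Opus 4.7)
The plan is to reduce Corollary \ref{corAbel} immediately to Theorem \ref{th3Col}. That theorem produces a $3$-coloring of $\Omega$ whose stabilizer $H := \Stab_G(\text{coloring})$ has all orbits on $\Omega$ of length at most $2$, and I claim this structural condition forces $H$ to be an elementary abelian $2$-group.

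The observation I would carry out is the following. The trivial cases $|\Omega| \leq 1$ are immediate since $H = 1$. For $|\Omega| \geq 2$, let $\Omega = \Omega_1 \sqcup \cdots \sqcup \Omega_k$ be the decomposition into $H$-orbits, each of size at most $2$. The action of $H$ on each $\Omega_i$ factors through $\Sym(\Omega_i)$, which is either trivial or isomorphic to $C_2$. Since $H \leqslant G \leqslant \Sym(\Omega)$ acts faithfully on $\Omega$, the product of these orbit actions gives an injective homomorphism
\[ H \> \hookrightarrow \> \prod_{i=1}^{k} \Sym(\Omega_i) \> \leqslant \> C_2^{\, k} . \]
Hence $H$ embeds in an elementary abelian $2$-group and is therefore itself an elementary abelian $2$-group, as required.

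There is essentially no obstacle here: the whole content is in Theorem \ref{th3Col}, and this corollary is just the translation of the condition ``all orbits of length at most $2$'' into a structural statement about $H$. I would simply present the argument as one short paragraph, noting that Theorem \ref{th3Col} in fact produces at least $5$ inequivalent such $3$-colorings, so the existence asserted in the corollary is even far from tight.
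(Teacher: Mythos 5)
Your proposal is correct and is exactly the deduction the paper intends: the corollary is stated immediately after Theorem \ref{th3Col} with no separate proof, the point being precisely that a subgroup of $\Sym(\Omega)$ all of whose orbits have length at most $2$ embeds in a direct product of copies of $C_2$ and is therefore elementary abelian of exponent $2$. Your handling of the trivial case $|\Omega|\leq 1$ (needed since Theorem \ref{th3Col} assumes $|\Omega|\geq 2$) is a reasonable small addition.
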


See \cite{Glu25} for another positive result (with two colors) that emphasizes the importance of the prime $2$.
In general, five colors are necessary and sufficient to achieve a trivial stabilizer \cite[Th. 1.2]{Ser96}.

\subsection{CFSG-free results for nonsolvable groups}

The foundation stone to prove a result like Theorem \ref{thSolvSet} is the fact that
a primitive solvable group of large degree has a regular orbit on the power set \cite{Glu83}.
With the trivial exceptions of the alternating and symmetric groups,
the same is true in the nonsolvable case, but the known proofs of Cameron-Neumann-Saxl and Seress \cite{CNS84,Ser97}
require the Classification of Finite Simple Groups.
Actually, Cameron \cite{Cam86} proved a stronger statement concerning the number of regular orbits.
We observe that a more recent elementary result of Sun and Wilmes \cite{SW15,SW16}
is strong enough to provide an easier proof of Cameron's theorem.\footnote{Babai pointed out that
a slightly weaker observation, which gives an elementary proof of \cite{CNS84}, was made by himself in \cite[Th. 15.1]{Bab22}.}

\begin{theorem} \label{thCam}
Let $G$ be a primitive group of degree $n$ not containing $\Alt(n)$.
Then the proportion of regular orbits on the power set tends to $1$ as $n \to \infty$.
\end{theorem}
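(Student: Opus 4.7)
The plan is to reduce the proportion statement to a single asymptotic inequality. Let $N$ denote the number of subsets of $\Omega$ that do not lie in a regular orbit on $\mathcal{P}(\Omega)$. Since every regular orbit has size exactly $|G|$, the number $r$ of regular orbits equals $(2^n - N)/|G|$; and since every non-regular orbit is nonempty, the number of non-regular orbits is at most $N$. Consequently the proportion of regular orbits is at least $r/(r+N)$, which tends to $1$ provided
\[ N \cdot |G| \> = \> o(2^n) . \]

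For the bound on $N$, I would use that $\Delta \subseteq \Omega$ is fixed setwise by $g$ iff $\Delta$ is a union of orbits of $\langle g \rangle$. Writing $k(g)$ for the number of such orbits and $m(g)$ for the number of points moved by $g$, one has $k(g) \le (n - m(g)) + m(g)/2 = n - m(g)/2$, since the moved points split into $\langle g \rangle$-orbits of length at least $2$. Letting $d$ be the minimal degree of $G$, the standard double count gives
\[ N \> \le \> \sum_{g \neq 1} 2^{k(g)} \> \le \> (|G|-1) \cdot 2^{n - d/2} , \]
so that $N \cdot |G| \cdot 2^{-n} \le |G|^2 \cdot 2^{-d/2}$. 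It then suffices to plug in two elementary (CFSG-free) estimates valid for a primitive group $G$ of degree $n$ not containing $\Alt(n)$: the classical Babai-type lower bound $d \ge c_1 \sqrt{n}$ on the minimal degree, and the Sun--Wilmes quasipolynomial upper bound $|G| \le \exp((\log n)^{O(1)})$ on the order. Combining them, $|G|^2 \cdot 2^{-d/2} \le \exp((\log n)^{O(1)} - \Omega(\sqrt{n})) \to 0$, and the statement follows.

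The main obstacle is conceptual: one must verify that the Sun--Wilmes bound is strong enough to drive the argument without CFSG. Babai's older estimate $|G| \le \exp(O(\sqrt{n}\log^2 n))$ is too weak here, since the $\log^2 n$ factor would dominate the $\sqrt{n}$ saving coming from the minimal degree; the polylogarithmic improvement of Sun and Wilmes is precisely what closes the gap, replacing Cameron's original appeal to the Classification.
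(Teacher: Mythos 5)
Your reduction is correct and is essentially the content of the remark of Cameron that the paper simply cites: the passage from ``proportion of subsets in non-regular orbits'' to ``proportion of non-regular orbits'', and the count $N \le \sum_{g\ne 1}2^{k(g)} \le (|G|-1)\,2^{n-d/2}$, are exactly right, so everything reduces to showing $|G|^2\,2^{-d/2}\to 0$. Where the paper quotes Cameron's remark and then invokes \cite[Cor. 1.6]{SW16}, you unpack the counting yourself, which is fine.

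The genuine gap is in the external input you feed into this criterion. Sun and Wilmes do not prove a quasipolynomial bound $|G|\le\exp((\log n)^{O(1)})$ for primitive groups of degree $n$ not containing $\Alt(n)$; such a bound is false. Take $G=\Sym(m)$ acting on the $n=\binom{m}{2}$ two-element subsets of $\{1,\dots,m\}$: this group is primitive, does not contain $\Alt(n)$, and has order $m!=\exp(\Theta(\sqrt{n}\log n))$, which is not quasipolynomial in $n$. What Sun--Wilmes actually prove is a bound of the shape $\exp(O(n^{1/3}\log^{7/3}n))$ outside a list of known exceptions (the Cameron groups, i.e.\ automorphism groups of Johnson and Hamming schemes); for the non-exceptional groups this corrected bound still closes your argument, since it is eventually smaller than $2^{c_1\sqrt{n}/8}$. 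But for the exceptional groups your criterion genuinely fails, not just your quotation of it: for $\Sym(m)$ on $2$-subsets the minimal degree is $2(m-2)=\Theta(\sqrt{n})$ while $\log|G|=\Theta(\sqrt{n}\log n)$, so $|G|^2\,2^{-d/2}\to\infty$. Those groups require a separate direct argument (the Erd\H{o}s--R\'enyi-type count showing that almost all graphs are asymmetric, and its analogues for the other Cameron groups), which is the part of Cameron's original proof that your argument does not replace. A smaller point in the same direction: Babai's elementary bound $d\ge c_1\sqrt{n}$ is proved for uniprimitive groups, so the doubly transitive case needs a separate (still elementary) input, e.g.\ Bochert's minimal degree bound or Pyber's order bound, which genuinely is quasipolynomial in that case.
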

\begin{proof}
By the remark near the bottom of page 308 of \cite{Cam86},
it suffices to show that $|G| \leq c^{\sqrt{n}}$ for some absolute constant $c$
- this is immediate from \cite[Cor. 1.6]{SW16}.
\end{proof}

We stress that Babai's theorem \cite{Bab81} has several applications,
and Theorem \ref{thCam} is a nice example where Sun-Wilmes improvement turns out to be crucial.
(See \cite{Ebe24} for another recent example.)

We are ready to prove Theorem \ref{thNonAlt}. Again, we prove a slightly stronger statement.

\begin{theorem} \label{thCam+}
There exists a constant $C$ such that the following holds.
Let $G$ be a permutation group on the finite set $\Omega$, $|\Omega| \geq 2$.
Then there are at least two orbits of subsets of $\Omega$
whose setwise stabilizers have all nonalternating composition factors of order at most $C$.
\end{theorem}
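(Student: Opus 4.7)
The plan is to mirror the inductive strategy used for Theorems \ref{thStrong} and \ref{th3Col}, adapted to the new invariant. I would induct on $|\Omega|$, taking as inductive statement the full conclusion of Theorem \ref{thCam+}: at least two $G$-orbits of subsets whose setwise stabilizer has all nonalternating composition factors of order at most an absolute constant $C$. The intransitive reduction is standard: if $\Omega$ decomposes into $G$-orbits $\Omega_1, \ldots, \Omega_t$, one applies the hypothesis to each $G^{\Omega_j}$, takes a union of chosen subsets, and varies the choice over $\Omega_1$ to produce two inequivalent subsets for $G$.

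For the primitive base case I would split into three subcases. If $G$ contains $\Alt(n)$ with $n = |\Omega|$, the setwise stabilizer of any $k$-subset is $G \cap (\Sym(k) \times \Sym(n-k))$, whose composition factors are $\Alt(k)$, $\Alt(n-k)$ and at most two copies of $C_2$; taking $k = 1$ and $k = 2$ (or $k = 0, 1$ when $n = 2$) yields the two required orbits, with all nonalternating composition factors of order $2$. If $G$ is primitive of degree $n$ not containing $\Alt(n)$ and $n$ exceeds a threshold $N$, Theorem \ref{thCam} guarantees arbitrarily many regular orbits on $\mathcal{P}(\Omega)$, whose stabilizers are trivial. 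If $G$ is primitive of degree $n \leq N$, then $|G| \leq N!$ is absolutely bounded and Remark \ref{remTotOrbits} supplies at least $n+1 \geq 3$ orbits. Setting $C := N!$ absorbs all three subcases.

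For the imprimitive step, write $G \leqslant B \wr_{|\Omega_1|} A$ with $A$ primitive on $\Omega_1$ ($|\Omega_1| \geq 2$) and $B$ transitive on $\Omega_2$, and apply the construction of Subsection \ref{sec2.3} with parameters $x = y = 2$. By the inductive hypothesis, $B$ has two distinct orbits of subsets $\Delta_1, \Delta_2 \subseteq \Omega_2$ with the desired property; by the primitive base case applied to $A$, there are two distinct orbits of $2$-colorings $X^{(1)}, X^{(2)}$ of $\Omega_1$ with good stabilizer in $A$. The resulting subsets $\Sigma^{(i)} \subseteq \Omega$ lie in distinct $G$-orbits by Lemma \ref{lemIneq}, and by Lemma \ref{lemStabEmb} each $\Stab_G(\Sigma^{(i)})$ embeds in
\[
L^{(i)} \;=\; \bigl(\Stab_B(\Delta_1) \times \Stab_B(\Delta_2)\bigr) \rtimes \Stab_A(X^{(i)}),
\]
whose composition factors are the union of those of the three stabilizers, and hence have all nonalternating constituents of order at most $C$.

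The hard part will be pushing this from $L^{(i)}$ down to $\Stab_G(\Sigma^{(i)})$: composition factors of a subgroup are in general only \emph{sections} of the ambient group, and a bound on the composition factors of $L^{(i)}$ does not directly pass to the subgroup, as witnessed by $\PSL_2(p) \leqslant \Sym(p+1)$. Closing the induction will therefore require either strengthening the inductive hypothesis to control nonalternating simple sections of the stabilizer (straightforward in the Cameron-regular and small-degree subcases, but delicate when $A$ or $B$ contains $\Alt$) or exploiting the specific permutation-theoretic structure of the Subsection \ref{sec2.3} construction to constrain which subgroups of $L^{(i)}$ actually arise as $\Stab_G(\Sigma^{(i)})$. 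Making this work uniformly, so that $C$ remains an absolute constant, is the essential technical challenge.
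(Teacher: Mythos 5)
Your proposal is essentially the paper's proof. The published argument is exactly your induction: reduce to the transitive case, handle primitive groups via Theorem \ref{thCam} together with the trivial observations for groups containing $\Alt(n)$ and for bounded degree (the paper compresses this into ``it is easy to see that such a constant $C$ exists for all primitive groups''), and then run the Subsection \ref{sec2.3} construction with $x=y=2$, using Lemmas \ref{lemStabEmb} and \ref{lemIneq} to produce two inequivalent nice orbits of $G$. The only divergence is at the point you isolate as ``the hard part'': the paper closes the induction simply by remarking that the property is preserved by extensions (Jordan--H\"older) and then invoking the embedding of Lemma \ref{lemStabEmb}, i.e.\ it passes the bound from the semidirect product $L^{(i)}$ down to the embedded subgroup $\Stab_G(\Sigma^{(i)})$ without further comment.

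Your hesitation there is not misplaced. The class ``all nonalternating composition factors of order at most $C$'' is closed under extensions, quotients, normal subgroups and subdirect products, but not under arbitrary subgroups (your example $\PSL_2(p)\leqslant \Alt(p+1)$ is precisely the obstruction), and Lemma \ref{lemStabEmb} only gives a containment once $G$ is a proper subgroup of the wreath product. A fully watertight write-up would need to explain why the particular subgroups that actually arise --- the image of $\Stab_G(\Sigma^{(i)})$ in its action on the blocks, sitting inside $\Stab_A(X^{(i)})$, and the kernel of that action, which is a subdirect product of subgroups of the $\Stab_B(\Delta_j)$ --- inherit the bound; this is exactly the strengthening or structural analysis you call for, and the primitive base cases where the issue is nontrivial are those with an alternating constituent. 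So the single step you decline to take is one the paper takes silently: you have reconstructed the entire argument and have not missed any idea that the paper supplies, but your proposal, as written, stops short of the (asserted) conclusion at the same place where the paper's own justification is thinnest.
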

 \begin{proof} 
 After Theorem \ref{thCam}, it is easy to see that such a constant $C$ exists for all primitive groups.
 Given $C$, we call {\itshape nice} the orbits on the power set with the property in the statement.
 We can suppose that $G$ is transitive.
 By the Jordan-H\"older theorem, the property of having all nonalternating composition factors of order at most $C$,
 is a group property preserved by group extensions.
Let $\Omega = \Omega_1 \times \Omega_2$, with $|\Omega_1| \geq 2$.
 Let $G \leqslant B \wr_{|\Omega_1|} A$, with $A$ primitive on $\Omega_1$, and $B$ transitive on $\Omega_2$.
 By induction, $B$ admits two nice orbits on $\mathcal{P}(\Omega_2)$ (equivalently, two nice $2$-colorings).
 Arguing as in Subsection \ref{sec2.3} (set $x=y=2$), using these nice orbits of $B$ as colors for $\Omega_1$,
 and the two nice $2$-colorings of $\Omega_1$,
 we can construct two inequivalent nice orbits of $G$ on $\mathcal{P}(\Omega)$.
 The proof is complete.
\end{proof}

Using the Classification of Finite Simple Groups,
it should be possible to find an explicit version of Theorem \ref{thCam+}
(it is enough to find $C$ for all primitive groups).

 \vspace{0.1cm}
 \section{Primitive groups with solvable stabilizer} \label{sec3} 
 
 In this section, we move to consider pointwise stabilizers of very small sets.
 We would like to remark that, the idea of fixing a bounded number of points to obtain something interesting,
 is completely hopeless in imprimitive groups.
 This is a key difference with respect to stabilizing sets (see Corollaries \ref{corMain} or \ref{corAbel}, for example).
 
 \begin{example} 
 Let $B$ be a transitive group with stabilizer $B_0$, and let $S \leqslant \Sym(d)$.
 Let $G = B \wr_d S$ act imprimitively on $|B:B_0|d$ points.
 The stabilizer of a point in $G$, say $H$, contains a copy of $B_0 \times B^{d-1}$.
For each $b \geq 1$, the intersection of any $b$ conjugates of $H$ will contain a copy of $B^{d-b}$,
 showing that fixing less than $d$ points is useless.
 \end{example} 
 
 Therefore, we will focus on primitive groups, with the goal of proving Theorem \ref{thMain}.
 This proof is more algebraic, but the combinatorial ideas in the previous section will play a key role.
 In the case of primitive solvable groups, Theorem \ref{thMain} is \cite[Th. 1.1]{LS25}.
 This part is really about finding a good centralizer in irreducible solvable linear groups,
 which was done by Yang \cite{Yan09} using ideas in \cite{MW04}.
Using the O'Nan-Scott theorem, as explained by Li and Zhang in \cite[Sec. 2]{LZ11},
it is not hard to see that a primitive nonsolvable group with solvable stabilizer is almost simple or of a specific product type.
 
\subsection{Almost simple groups} 

Theorem \ref{thMaxAS} will follow from the tables in \cite[Sec. 10]{LZ11},
together with a few properties of finite simple groups and linear groups.
We first report a strong version of the Schreier conjecture,
which says that the group $\Out(T)$ of the outer automorphisms of a finite simple group is solvable.

\begin{lemma} \label{lemOut}
If $T$ is a finite simple group, then $\Out(T)$ is solvable of derived length at most $3$.
More precisely:
\begin{itemize}
\item[(i)] If $T$ is cyclic, alternating, or a sporadic simple group, then $\Out(T)$ is abelian;
\item[(ii)] $dl(\Out(T)) =3$ if and only if $T$ is the orthogonal group $\mathrm{P}\Omega^+_8(q)$ for $q$ odd.
\end{itemize}
\end{lemma}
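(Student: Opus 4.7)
The plan is to invoke the Classification of Finite Simple Groups and treat each family separately, using the standard description of $\Out(T)$. The key structural input is that for a finite simple group of Lie type $T$, the outer automorphism group decomposes as
\[
\Out(T) \> = \> D \rtimes (\Phi \times \Gamma),
\]
where $D$ is the group of diagonal automorphisms (abelian), $\Phi$ is the cyclic group of field automorphisms, and $\Gamma$ is the graph automorphism group (which is cyclic except in one case).

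First I would dispose of the ``abelian'' cases (i). For $T$ cyclic of prime order $p$, $\Out(T) \cong (\mathbb{Z}/p\mathbb{Z})^\ast$, which is abelian. For $T = \Alt(n)$, a classical result gives $\Out(T) \cong C_2$ when $n \neq 6$ and $\Out(\Alt(6)) \cong C_2 \times C_2$; in either case $\Out(T)$ is abelian. For the $26$ sporadic simple groups, inspection of the ATLAS shows $|\Out(T)| \leqslant 2$, so $\Out(T)$ is cyclic and hence abelian.

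Next I would handle the groups of Lie type. For all untwisted and twisted types other than $D_4$, the graph group $\Gamma$ is cyclic (it is trivial for types $B_n, C_n, E_7, E_8, F_4, G_2$ and the Suzuki/Ree groups, and of order $2$ for $A_n$ with $n \geqslant 2$, $D_n$ with $n \geqslant 5$, and $E_6$). Consequently $\Phi \times \Gamma$ is abelian, and $\Out(T)$ is a semidirect product of abelian by abelian, hence metabelian, so $dl(\Out(T)) \leqslant 2$. The exceptional case is $T = \mathrm{P}\Omega^+_8(q) = D_4(q)$, where triality gives $\Gamma \cong \Sym(3)$. Since $dl(\Phi \times \Sym(3)) = 2$ and this acts on the abelian group $D$, we already get $dl(\Out(T)) \leqslant 3$ in all cases, establishing the first sentence of the lemma.

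Finally, for (ii), I need to determine exactly when $D_4(q)$ achieves derived length $3$. When $q$ is even the center of the simply connected cover is trivial, so $D = 1$ and $\Out(T) \cong \Phi \times \Sym(3)$, which has derived length $2$. When $q$ is odd, $D$ is elementary abelian of order $4$, and the content of triality is precisely that $\Sym(3)$ permutes the three involutions of $D$ faithfully; hence $D \rtimes \Sym(3) \cong \Sym(4)$, which has derived length $3$. Extending by the cyclic $\Phi$ does not change this, giving $dl(\Out(T)) = 3$. Conversely, by the preceding paragraph no other family attains $dl \geqslant 3$. The main obstacle is simply marshalling the correct structural description of $\Out(T)$ for each Lie type and verifying the faithful triality action on $D$ for $\mathrm{P}\Omega^+_8(q)$, $q$ odd, but both facts are standard.
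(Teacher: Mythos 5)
Your proposal is correct and follows essentially the same route as the paper: invoke CFSG, check the cyclic/alternating/sporadic cases by inspection, and use the diagonal--field--graph decomposition of $\Out(T)$ for groups of Lie type, with triality on $\mathrm{P}\Omega^+_8(q)$ (and the vanishing of the diagonal subgroup for $q$ even) accounting for the unique derived-length-$3$ case. You simply supply more detail than the paper, which delegates most of this to the ATLAS; the only caveat is that the direct-product form $\Phi\times\Gamma$ fails for the exceptional graph automorphisms in types $B_2$, $F_4$, $G_2$ in small characteristic, but there $\Phi\Gamma$ is still abelian, so your conclusion is unaffected.
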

\begin{proof}
This follows from the Classification of Finite Simple Groups.
In particular, see \cite[Pag. xvi, Table 5]{CCN+85}.
For finite simple groups of Lie Type, $\Out(T)$ can be described as an extension of three groups:
diagonal, field and graph automorphisms (the field and graph automorphisms commute).
These groups are abelian except for the graph automorphisms group of $\mathrm{P}\Omega^+_8(q)$,
which is isomorphic to $\Sym(3)$.
\end{proof}

\begin{remark} \label{remSolRad}
In an almost simple group $G$, the socle $T$ is the unique minimal normal subgroup.
This implies that the solvable radical of $G$ is trivial and that every solvable subgroup of $G$ is core-free.
\end{remark} 

\begin{proof}[Proof of Theorem \ref{thMaxAS}]
Let $T$ be the socle of $G$, and let $G_0$ be the normal subgroup containing $T$ which is described in \cite[Pag. 444]{LZ11}.
In particular, $G_0$ is minimal such that $H_0 = H \cap G_0$ is a maximal subgroup of $G_0$.
It follows that the maximal subgroup $H$ of $G$ is an extension of $H_0$ with
$$ H/H_0 \cong G_0H/G_0 = G/G_0 , $$
which is a section of $G/T$ and so of $\Out(T)$.
Therefore
$$ dl(H) \leq dl(H_0) + dl(G/G_0) . $$
Looking at the Tables 14-20 in \cite{LZ11}, we observe that one of the following holds:
\begin{itemize}
\item[(i)] $G_0$, and so $H_0$, is one of finitely many groups;
\item[(ii)] $G_0$ is symmetric or alternating, and $H_0$ has derived length bounded by $6$;
\item[(iii)] $G_0$ is a simple group of Lie type of bounded rank (recall that exceptional groups lie in this class);
\item[(iv)] $G_0$ is $\PSL_r(q)$ or $\PSU_r(q)$, and $H_0$ is metabelian.
\end{itemize}
To deal with (iii) uniformly, we use the fact that a solvable linear group
has derived length bounded by a function on the rank \cite{Dix68}.
This and Lemma \ref{lemOut} prove Theorem \ref{thMaxAS} for some absolute constant.
The (relatively high) constant $10$ is attained by the Fischer group $\Fi_{_{23}}$,
which makes smooth the remaining computations in (i)-(iii).
A more careful inspection of the tables shows that $dl(H_0) \leq 6$ in the infinite families of (iii),
while we use a computer for the remaining groups.
We point out that there are some mistakes
in \cite[Tab. 15]{LZ11}, and rely instead on the \cite{GAP4} library \texttt{CTblLib (v1.3.9)}.\footnote{One
mistake is in correspondence of $\Fi_{_{23}}$.
In fact, $\Fi_{_{23}}$ has a unique conjugacy class of solvable maximal subgroups, namely \texttt{AtlasSubgroup(``Fi23'',7)}),
with structure $3^{1+8}.2^{1+6}.3^{1+2}.2S_4$ (with the ATLAS notation) and derived length $10$.}
A potential obstruction is given by a certain conjugacy class of solvable maximal subgroups $H$ of the baby monster,
which seems to be not available on a computer.
With the ATLAS notation \cite{CCN+85}, $H$ is isomorphic to $[3^{11}].(S_4 \times 2S_4)$.
Any group of order $3^{11}$ has nilpotency class at most $10$ and derived length at most $4$,
showing that $dl(H) \leq 8$ in this case.
\end{proof} 
 
 A cheap version of Theorem \ref{thMaxAS} says that
there exists a constant $C$ such that if $G$ is a finite group with a solvable maximal subgroup $H$,
and the derived length of $H$ is more than $C$, then $G$ is not simple.
It would be interesting to find an elementary proof of this curious fact.

 \subsection{Groups of product type} 
 
Let $X$ be a primitive almost simple group with solvable point stabilizer $Y <X$, and let $T \lhd X$ be its socle.
Let $d \geq 2$, and let $S \leqslant \Sym(d)$ be a transitive solvable group.
By \cite[Th. 1.1]{LZ11}, the primitive group $G$ that we care about satisfies
$$ T \wr_d S \> \leqslant \> G \> \leqslant \> X \wr_d S , $$
and acts on $|X:Y|^d$ points.
(See also \cite[Sec. 8.2]{Bur21}.)

\begin{proof}[Proof of Theorem \ref{thMain}] 
By the discussion above, it rests only to deal with the product type case.
Let $V \cong X^d$ be the base subgroup of $X \wr_d S$, and let $H < G$ be the stabilizer of a point in the product action.
Let $W \cong Y^d$ be the subgroup of $V$ such that $H=WS \cap G$.
Stabilizing two points corresponds to intersecting $H$ with one of its conjugates.
To prove the theorem, we will find $v \in G \cap V$ such that the derived length of
$$ I_v \> = \> W S \cap (W S)^v $$
is bounded by an absolute constant (actually $13$).
The proof will follow, because $H \cap H^v \leqslant I_v$,
and the derived length of a subgroup is bounded by the derived length of the group.
Let $\pi \colon G \to S$ be the natural projection, and let $v \in V$.
Since $\Ker(\pi) = G \cap V$, we have
\begin{equation} \label{eqIv}
dl(I_v) \> \leq \> dl(I_v \cap V) + dl( \pi(I_v) ) . 
\end{equation}
We observe that
$$ I_v \cap V \subseteq WS \cap V  = W . $$
This gives $I_v \cap V=I_v \cap W$, and therefore we can write
$$ dl(I_v \cap V) \leq dl(W) = dl(Y) . $$
Now $dl(Y) \leq 10$ by Theorem \ref{thMaxAS}, and the rest of the proof deals with $\pi(I_v)$.
It is not hard to see that $C_S(v) = S \cap S^v \subseteq \pi(I_v)$, but in general $\pi(I_v)$ can be much bigger.
For every $w \in W$, $\sigma \in S$ and $v \in V$, an explicit computation gives
$$ (w,\sigma)^v = (v^{-1} w v^\sigma , \sigma) . $$
This implies that
$$ \pi(I_v) \> = \> \{ \sigma \in S : \> W \cap v^{-1} W v^\sigma \neq \emptyset \} . $$
Now let $v = (x_1,\ldots,x_d)$. We have 
$$ \pi(I_{(x_1,\ldots,x_d)})  =  \{ \sigma \in S : \> Y \cap x_i^{-1} Y x_{\sigma(i)} \neq \emptyset \mbox{ for each } i = 1, \ldots , d \} . $$
Since
$$ Y \cap x_i^{-1} Y x_{\sigma(i)} \neq \emptyset \hspace{1cm} \mbox{ if and only if } \hspace{1cm} x_{\sigma(i)} \in  Y x_i Y , $$
we obtain the key equality
\begin{equation} \label{eqKey}
 \pi(I_{(x_1,\ldots,x_d)}) \> = \> \{ \sigma \in S : \> x_{\sigma(i)} \in  Y x_i Y \mbox{ for each } i = 1, \ldots , d \} .
\end{equation}
	This suggests to look at the double cosets of $Y$ in $X$ as colors,
	and connects Theorem \ref{thMain} with the matter of Section \ref{sec2}.
	Let $c_1, \ldots ,  c_r \in X$ be representatives of these double cosets ($r \geq 2$).
	For any $v = (x_1,\ldots,x_d) \in V$, and $j \in \{1, \ldots, r \}$, let
	$$ \Delta_j = \Delta_j (v) = \{ i \in \{1,\ldots,d\} : x_i \in Yc_jY \} . $$
	Of course the $\Delta_j$'s form a partition of $\{1,\ldots,d\}$.
	The equality in (\ref{eqKey}) shows that $\pi(I_v)$ only depends on this partition,
	and in particular
	\begin{equation} \label{eqFin}
	\pi(I_v) \> = \> \cap_{j =1}^r \StabS(\Delta_j) \> = \> \cap_{j =1}^{r-1} \StabS(\Delta_j) . 
	\end{equation} 
	To construct $v$, we will use only two elements $c_1$ and $c_2$
	(and so two sets $\Delta_1$ and $\Delta_2 = \{1,\ldots,d\} \setminus \Delta_1$).
	One reason is that we want $v$ to lie in $G \cap V$, and not just in $V$.
	This is assured if we choose representatives $c_i$'s which are in $T$.
	We can surely choose $c_1=1$ and $c_2 \in T \setminus Y$, because otherwise $T \leqslant Y$,
	which is impossible because $Y$ is core-free.
	Finally, if $\Delta_1 = \Delta \subseteq \{1,\ldots,d\}$ is as provided by Corollary \ref{corMain},
	from (\ref{eqFin}) we get $dl( \pi(I_v) ) \leq 3$, concluding by (\ref{eqIv}) that $dl(I_v) \leq 13$ as desired.
\end{proof}
 
 \begin{remark}
	In the proof of Theorem \ref{thMain}, it might seem wasteful to construct $v$ with only two elements $c_1$ and $c_2$.
	As explained, one reason is that we want $v \in G$, but there is also a different motivation.
	In fact, it is well known that the number $r$ of double cosets of $Y$ in $X$
	equals the rank of the primitive group $X$ with solvable stabilizer $Y$,
	and there actually exist various doubly transitive groups of this type.
	A more detailed analysis of almost simple groups, and the additional use of Corollary \ref{corAbel},
	might help to find the right constant in Theorem \ref{thMain}.
	In the case of primitive solvable groups, this lies somewhere between $4$ and $9$
	\cite[Rem. 2.9]{LS25}.
	H.Y. Huang pointed out that $5$ is attained by the almost simple group $\Aut(\mathrm{P}\Omega^+_8(3))$.
	With some more work, it is not hard to use this information to construct a group of product type where
	the constant $6$ is achieved.
 \end{remark}
 
 A small corollary on set-stabilizers can be deduced from Theorem \ref{thMain}.
 
\begin{corollary}
Let $\delta \in \{2,3,4\}$.
Let $G \leqslant \Sym(\Omega)$ be a primitive group with solvable stabilizer.
Then there exists $\Delta \subseteq \Omega$, $|\Delta|=\delta$,
such that $\Stab_G(\Delta)$ is solvable of derived length bounded by an absolute constant.
\end{corollary}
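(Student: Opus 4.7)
The plan is to deduce this corollary almost directly from Theorem \ref{thMain}. Let $C_0$ denote the absolute constant produced by that theorem, so there exist points $\alpha,\beta \in \Omega$ whose pointwise stabilizer $G_{\alpha,\beta}$ is solvable with $dl(G_{\alpha,\beta}) \leq C_0$ (the proof of Theorem \ref{thMain} actually yields $C_0 \leq 13$). For $\delta=2$, take $\Delta = \{\alpha,\beta\}$. For $\delta \in \{3,4\}$, enlarge this pair to a set $\Delta \subseteq \Omega$ of cardinality $\delta$ by adjoining arbitrarily chosen additional points, which is possible whenever $|\Omega| \geq \delta$; the finitely many primitive groups of degree less than $\delta$ have bounded order and are trivial to handle.

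The key observation is standard. The natural restriction homomorphism $\Stab_G(\Delta) \to \Sym(\Delta)$ has kernel the pointwise stabilizer $C_G(\Delta)$. Because $C_G(\Delta) \leqslant G_{\alpha,\beta}$ by construction, we have $dl(C_G(\Delta)) \leq C_0$, so $C_G(\Delta)$ is solvable. The image lies in $\Sym(\Delta) \cong \Sym(\delta)$, which for $\delta \leq 4$ is solvable with derived length at most $3$. Combining, $\Stab_G(\Delta)$ is an extension of two solvable groups and therefore solvable, and
$$ dl(\Stab_G(\Delta)) \> \leq \> dl(C_G(\Delta)) + dl(\Sym(\delta)) \> \leq \> C_0 + 3 . $$
This gives a uniform absolute constant bound as required, with a minor refinement for $\delta=2$ (where the additive term is at most $1$).

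There is no substantial obstacle; the argument is essentially an invocation of Theorem \ref{thMain} combined with the fact that $\Sym(\delta)$ is a solvable group of bounded derived length for $\delta \leq 4$. The only point worth noting is that this crude estimate is almost certainly not best possible: a finer analysis of which permutations in $\Sym(\Delta)$ actually lift to $G$, together with the freedom to choose the extra points in $\Delta$ cleverly, should allow one to lower the constant considerably, parallel to the remark following the proof of Theorem \ref{thMain} that the constant $13$ there is unlikely to be sharp.
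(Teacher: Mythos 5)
Your proposal is correct and follows essentially the same route as the paper: take the two points from Theorem \ref{thMain}, enlarge to a set $\Delta$ of size $\delta$, and use that $C_G(\Delta)$ is contained in the two-point stabilizer while $\Stab_G(\Delta)/C_G(\Delta)$ embeds in $\Sym(\delta)\leqslant\Sym(4)$, which is solvable. The only difference is that you make the additive bound $C_0+3$ explicit, which the paper leaves implicit.
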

\begin{proof}
Let $\{x_1,x_2\} \subseteq \Omega$ be as provided by Theorem \ref{thMain}.
If $\{x_1,x_2\} \subseteq \Delta$, then $C_G(\Delta)$ is solvable of bounded derived length.
Now $\Stab_G(\Delta)/C_G(\Delta) \leqslant \Sym(4)$,
and it follows that $\Stab_G(\Delta)$ is solvable of bounded derived length.
\end{proof}

\vspace{0.1cm}

   \vspace{0.5cm}


\begin{thebibliography}{ZZZZZZZ} 

\bibitem[Bab81]{Bab81} L\'aszl\'o Babai,
\textit{On the order of uniprimitive permutation groups},
Ann. of Math. \textbf{113} (1981), 553-568.

\bibitem[Bab22]{Bab22} L\'aszl\'o Babai,
\textit{Asymmetric coloring of locally finite graphs and profinite permutation groups: Tucker's Conjecture confirmed},
J. Algebra \textbf{607} (2022), 64-106.

\bibitem[BC11]{BC11} Robert F. Bailey, Peter J. Cameron,
\textit{Base size, metric dimension and other invariants of groups and graphs},
Bull. Lond. Math. Soc. \textbf{43} (2011), 209-242.

\bibitem[Bur21]{Bur21} Timothy C. Burness,
\textit{Base sizes for primitive groups with soluble stabilisers},
Algebra Number Theory \textbf{15} (2021), 1755-1807.

\bibitem[CNS84]{CNS84} Peter J. Cameron, Peter M. Neumann, Jan Saxl,
\textit{On groups with no regular orbits on the set of subsets},
Arch. Math. (Basel) \textbf{43} (1984), 295-296.

\bibitem[Cam86]{Cam86} Peter J. Cameron,
\textit{Regular orbits of permutation groups on the power set},
Discrete Math. \textbf{62} (1986), 307-309.

\bibitem[CCN\textsuperscript{+}85]{CCN+85} John H. Conway, Robert T. Curtis, Simon P. Norton, Richard A. Parker, Robert A. Wilson,
\textit{Atlas of Finite Groups},
Oxford University Press (1985).

\bibitem[Dix68]{Dix68} John D. Dixon,
\textit{The solvable length of a solvable linear group},
 Math. Z. \textbf{107} (1968), 151-158.

\bibitem[DHM18]{DHM18} H\"ulya Duyan, Zolt\'an Halasi, Attila Mar\'oti,
\textit{A proof of Pyber's base size conjecture},
Adv. Math. \textbf{331} (2018), 720-747.

\bibitem[Ebe24]{Ebe24} Sean Eberhard,
\textit{Dixon's asymptotic without the classification of finite simple groups},
Random Structures Algorithms \textbf{64} (2024), 1016-1019.

\bibitem[GAP4]{GAP4} The GAP Group,
\textit{Groups, Algorithms, and Programming}, Version 4.11.1.

\bibitem[Glu83]{Glu83} David Gluck,
\textit{Trivial set-stabilizers in finite permutation groups},
Canad. J. Math. \textbf{35} (1983), 59-76.

\bibitem[Glu25]{Glu25} David Gluck,
\textit{Set-stabilizers in solvable permutation groups},
to appear in J. Group Theory (2025).

\bibitem[LZ11]{LZ11} Cai Heng Li, Hua Zhang,
\textit{The finite primitive groups with soluble stabilizers, and the edge-primitive $s$-arc transitive graphs},
Proc. Lond. Math. Soc. \textbf{103} (2011), 441-472.

\bibitem[LS25]{LS25} Francesca Lisi, Luca Sabatini,
\textit{Fixing two points in primitive solvable groups},
 Comm. Algebra \textbf{53} (2025) 4449-4454.

\bibitem[MW93]{MW93} Olaf Manz, Thomas R. Wolf,
\textit{Representations of Solvable Groups},
University Press, Cambridge, 1993.

\bibitem[MW04]{MW04} Alexander Moret\'o, Thomas R. Wolf,
\textit{Orbit sizes, character degrees and Sylow subgroups},
Adv. Math. \textbf{184} (2004), 18-36. 

\bibitem[Pyb93]{Pyb93} L\'aszl\'o Pyber,
\textit{On the orders of doubly transitive permutation groups, elementary estimates},
J. Combin. Theory Ser. A \textbf{62} (1993), 361-366.

\bibitem[Ser96]{Ser96} \'Akos Seress,
\textit{The minimal base size of primitive solvable permutation groups},
J. Lond. Math. Soc. \textbf{53} (1996), 243-255. 

\bibitem[Ser97]{Ser97} \'Akos Seress,
\textit{Primitive groups with no regular orbits on the set of subsets},
Bull. Lond. Math. Soc. \textbf{29} (1997), 697-704.

\bibitem[SW15]{SW15} Xiaorui Sun, John Wilmes,
\textit{Faster canonical forms for primitive coherent configurations: Extended abstract},
Proc. 47th Annu. ACM Symp. Theory Comput., ACM, New York (2015), 693-702.

\bibitem[SW16]{SW16} Xiaorui Sun, John Wilmes,
\textit{Structure and automorphisms of primitive coherent configurations},
preprint available at \texttt{\MYhref[mypink]{https://arxiv.org/pdf/1510.02195}{arxiv.org/pdf/1510.02195}} (2016).

\bibitem[Yan09]{Yan09} Yong Yang,
\textit{Orbits of the actions of finite solvable groups},
J. Algebra \textbf{321} (2009), 2012-2021.

   \end{thebibliography}
\end{document}